\newcommand{\R}{\mathbb{R}}
\newcommand{\ul}{\mathbf}
\newcommand{\bp}{\mathbf{p}}
\newcommand{\bx}{\mathbf{x}}
\newcommand{\bR}{\mathbf{R}}
\newcommand{\bn}{\mathbf{n}}
\newcommand{\be}{\mathbf{e}}
\newcommand{\desda}{\Leftrightarrow}
\DeclareMathAlphabet\gothic{U}{euf}{m}{n}
\begin{document}
\title{Total Variation and Mean Curvature PDEs on $\R^{d} \rtimes S^{d-1}$}
\titlerunning{Total Roto-Translation Variation Flow on $\R^{d} \rtimes S^{d\!-\!1}$}
% If the paper title is too long for the running head, you can set
% an abbreviated paper title here
%
\author{Remco Duits$^1$, Etienne St-Onge$^2$, Jim Portegies$^1$ and Bart Smets$^1$}
\authorrunning{R.Duits, E.St.-Onge, J.W.Portegies, B.M.N.Smets}
% First names are abbreviated in the running head.
% If there are more than two authors, 'et al.' is used.
%
\institute{$^1$CASA, Eindhoven University of Technology, the Netherlands, \\
$^2$ SCIL, Sherbrooke Connectivity Imaging Lab, Canada}
\maketitle              % typeset the header of the contribution
\begin{abstract}
Total variation regularization and total variation flows
(TVF) have been widely applied for image enhancement and denoising.
% A main advantage of TV over approaches like data-driven diffusion flows or L2-Tikhonov regularization
% % (involving minimization of $\mathbb{L}_2$-norms of gradients)
% is that i
% It does not impose strong regularity assumptions on images and preserves jumps and %plateaus. %in images.
To include a generic preservation of crossing curvilinear structures in TVF we lift images to the homogeneous space \mbox{$\mathbb{M}=\R^{d}\rtimes S^{d\!-\!1}$} of positions and orientations as a Lie group quotient in $SE(d)$. For $d=2$ this is called  `total roto-translation variation' by Chambolle \& Pock.
We extend this to $d=3$, by a PDE-approach with a limiting procedure for which we prove convergence. We also include a Mean Curvature Flow (MCF) in our PDE model on $\mathbb{M}$. This was first proposed for $d=2$ by Citti et al. and we extend this to $d=3$. Furthermore, for $d=2$ we take advantage of locally optimal differential frames in invertible orientation scores (OS).
%on $\R^d \rtimes S^{d-1}$.

We apply our TVF and MCF in the denoising/enhancement of crossing fiber bundles in DW-MRI. In comparison to data-driven diffusions, we see a better preservation of bundle boundaries and angular sharpness in fiber orientation densities at crossings. We support this by error comparisons on a noisy DW-MRI phantom.
% TODO ESO : it's not really a denoising benchmark """denoising benchmark sequence (ISBI-HARDI Challenge 2013)""",
% I would describe as a """by error comparisons on a noisy DW-MRI phantom."""
We also apply our TVF and MCF in enhancement of crossing elongated structures in 2D images via OS,
and compare the results to nonlinear diffusions (CED-OS) via OS.
%reveals advantages (e.g. plateau preservations).

% TODO ESO : I think the abstract should be shorter (more straightforward, less details)
\keywords{Total Variation  \and Mean Curvature  \and Sub-Riemannian Geometry \and Roto-Translations \and Denoising \and Fiber Enhancement}
\end{abstract}
\section{Introduction} %(TO BE WRITTEN)

In the last decade, many PDE-based image analysis techniques for tracking and enhancement of curvilinear structures took advantage of lifting the image data to the homogeneous space $\mathbb{M}=\R^{d} \rtimes S^{d\!-\!1}$ of d-dimensional positions and orientations, cf.~\!\cite{DuitsPhD,Citti,Vogt,Boscain2018,bekkersPhD,Chambolle}. The precise definition of this homogeneous space follows in the next subsection. Set-wise it can be seen as a Cartesian product \mbox{$\mathbb{M}=\R^d\times S^{d\!-\!1}$}. Geometrically it can be equipped with a roto-translation equivariant geometry and topology beyond the usual isotropic Riemannian setting.

Typically, these PDE-based image analysis techniques
involve flows that implement morphological and (non)linear scale spaces or solve variational models.
% , or scale integrated variants such as Tikhonov %regularization.
The key advantage of extending the image domain from $\R^d$ to the higher dimensional lifted space $\mathbb{M}$ is that the PDE-flows do not suffer from crossings as fronts can pass without collision. This idea was shown for image enhancement in~\mbox{\cite{Fran2009,Sanguinetti}}. In \cite{Fran2009} the method of coherence enhancing diffusion (CED) \cite{Weic99b}, was lifted to $\mathbb{M}$ in a diffusion flow method called "coherence enhancing diffusion on invertible orientation scores" (CED-OS), that is recently generalized to 3D \cite{JanssenJMIV}.
Also for geodesic tracking methods, it helps that crossing line structures are disentangled in the lifted data. Geodesic flows prior to steepest descent also rely on (related \cite{Schmidt,BekkersSIAM,Chambolle}) PDEs on $\mathbb{M}$ commuting with roto-translations. They can account for crossings/bifurcations/corners~\cite{bekkersPhD,duitsmeestersmirebeauportegies,Chambolle}.

Nowadays PDE-flows on orientation lifts of \emph{3D images} are indeed relevant for applications such as fiber enhancement \cite{CreusenDuits,Vogt,Reisert,DuitsJMIV2013} and fiber tracking \cite{PortegiesPhD} in Diffusion-Weighted Magnetic Resonance Imaging (DW-MRI),
and in enhancement \cite{JanssenJMIV} and tracking \cite{cohen20183d} of blood vessels in 3D images.

As for PDE-based image denoising and enhancement, total variation flows (TVF) are more popular than nonlinear diffusion flows. Recently, Chambolle \& Pock generalized TVF  from $\R^2$ to $\mathbb{M}=\R^{2} \times S^{1}$, via `total roto-translation variation' (TV-RT) flows \cite{Chambolle} of 2D images. They employ (a)symmetric Finslerian geodesic models on $\mathbb{M}$ cf.~\!\cite{duitsmeestersmirebeauportegies}. As TVF falls short on invariance w.r.t. monotonic co-domain transforms, we also consider a Mean Curvature Flow (MCF) variant in our PDE model on $\mathbb{M}$, as proposed for 2D (i.e. $d=2$) by Citti et al.~\cite{Sanguinetti}.

To get a visual impression of how such PDE-based image processing on lifted images (orientation scores) works, for the case of tracking and enhancement of curvilinear structures in images
see Fig.~\!\ref{fig:intro}.
%\begin{remark} \label{vis}
In the 3rd row of Fig.~\!\ref{fig:intro}, and henceforth, we visualize
a lifted image $U: \R^{3} \rtimes S^{2} \to \R^+$ by a grid of angular profiles
%\begin{equation}
%\label{vis}
$\{\, \mu \, U(\mathbf{x},\mathbf{n})\, \mathbf{n}\;|\; \mathbf{x} \in \mathbb{Z}^{3}, \mathbf{n} \in S^{2}\,\}$, %\textrm{ with }\mu>0,
%\end{equation}
with fixed $\mu>0$.
%\end{remark}
\begin{figure}
\centerline{
\includegraphics[width=\hsize]{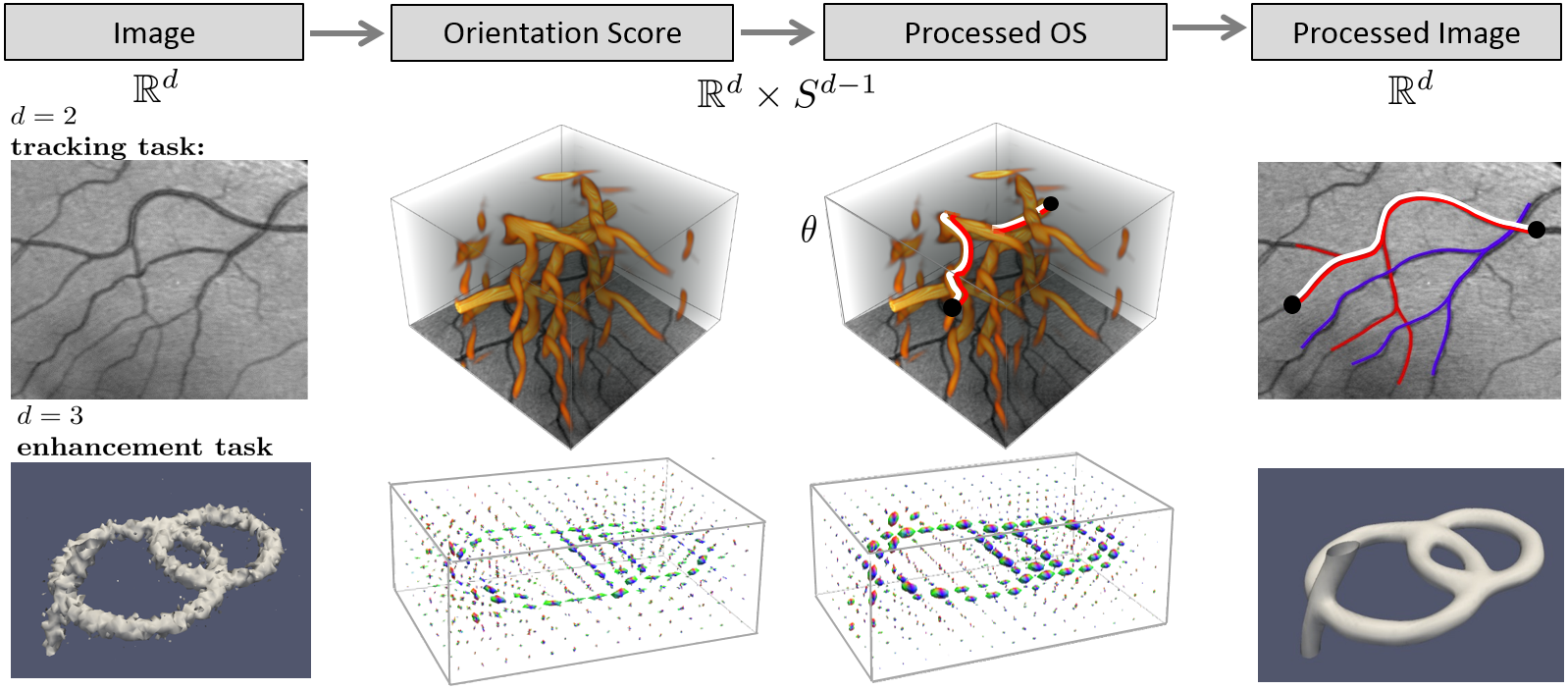}
}
\caption{Instead of direct PDE-based processing of an image, we apply PDE-based processing on a lifted image $U: \mathbb{R}^{d} \rtimes S^{d-1} \to \R$
(e.g. an orientation score: OS). The OS is obtained by convolving
the image with a set of rotated wavelets allowing for stable reconstruction \cite{DuitsPhD,bekkersPhD,JanssenJMIV}.
2nd row: Vessel-tracking in a 2D image via geodesic PDE-flows in OS that underly TVF: \cite{bekkersPhD,duitsmeestersmirebeauportegies,Chambolle}, with $\ul{n}=(\cos \theta, \sin \theta)^T \in S^1$.
3rd row: CED-OS diffusion of a 3D image \cite{JanssenJMIV,DuitsJanssen} visualized as a field of angular profiles.
In this article we study image enhancement and denoising via TVF and MCF on $\mathbb{M}=\R^{d} \rtimes S^{d\!-\!1}$ and compare to nonlinear diffusion methods on $\mathbb{M}$ (like CED-OS).
}
\label{fig:intro}
\end{figure}

The main contributions of this article are:
\vspace{-0.03cm}
\begin{itemize}
\item  We set up a geometric PDE flow framework on $\mathbb{M}$ including TVF, MCF, and diffusion. We tackle the PDEs by a basic limiting procedure. For TVF we prove convergence via Gradient flow theory by Brezis-Komura \cite{Brezis,Ambrosio}.
\item  We extend TVF on $\mathbb{M}$ \cite{Chambolle}, and MCF on $\mathbb{M}$ \cite{Sanguinetti} to the 2D and 3D setting.
\item  We apply TVF and MCF in the denoising and enhancement of crossing fiber bundles in fiber orientation density functions (FODF) of DW-MRI data. In comparison to data-driven diffusions, we show a better preservation of bundle boundaries and angular sharpness with TVF and MCF. % at crossings.
We support this observation by error comparisons on a noisy DW-MRI phantom.
\item We include locally optimal differential frames (LAD) \cite{DuitsJanssen} in invertible orientation scores (OS), and propose crossing-preserving denoising methods TVF-OS, MCF-OS. We show benefits of LAD inclusion on 2D data.
\item  We compare TVF-OS, MCF-OS to CED-OS on 2D images.
\end{itemize}
% but at the same time the gain of TVF-OS and MCF-OS over CED-OS seems  limited.

\section{Theory}

\subsection{The Homogeneous Space $\mathbb{M}$ of Positions and Orientations}

Set $d\in \{2,3\}$.
Consider the rigid body motion group, $SE(d)=\R^{d} \rtimes SO(d)$.
It acts transitively on $\R^{d} \times S^{d\!-\!1}$ by
\[
g \odot (\ul{x}',\ul{n}') =
(\ul{R}\ul{x}'+\ul{x},\ul{R} \ul{n}'), \textrm{ for all }g= (\ul{x},\ul{R}) \in SE(d),\  (\ul{x}',\ul{n}') \in \R^{d} \times S^{d\!-\!1}.
\]
Now set $\ul{a} \in S^{d-1}$ as an a priori reference axis, say $\ul{a}=(1,0)$ if $d=2$ and $\ul{a}=\be_z=(0,0,1)^T$ if $d=3$.
The homogeneous space
of positions and orientations is the partition of left-cosets:
\[
\mathbb{M}:=\R^{d} \rtimes S^{d\!-\!1}:=SE(d)/H,
\]
in $SE(d)$ and
$H=\{g \in SE(d) \;|\; g \odot (\ul{0},\ul{a})=(\ul{0},\ul{a})\}$.
For $d=2$ we have $\mathbb{M}\equiv SE(2)$. For $d=3$ we have that
\begin{equation} \label{subgroup}
H=\{h_{\alpha}:=(\mathbf{0}, \mathbf{R}_{\mathbf{a},\alpha}) \;|\; \alpha \in [0,2\pi)\},
\end{equation}
where $\bR_{\mathbf{a},\alpha}$ denotes a (counter-clockwise) rotation around $\mathbf{a} =\be_z$.
% by $(\bx',\bn') \mapsto g \odot (\mathbf{x}',\mathbf{n}')$ given by (\ref{actiononset}).
%\begin{equation} \label{actiononset}
%g \odot (\mathbf{y},\mathbf{n}):= (\mathbf{x},\mathbf{R}) \odot (\mathbf{y},\mathbf{n})= (\mathbf{R}\mathbf{y}+\mathbf{x}, \mathbf{R} \mathbf{n}), \textrm{ for all }g=(\mathbf{x},\mathbf{R}) \in SE(3), \textrm{ and }(\mathbf{y},\mathbf{n}) \in \R^{3}\times S^2.
%\end{equation}
Recall that by the definition of the left-cosets one has
$
H = \{\mathbf{0}\} \times SO(2), \textrm{ and }g_{1}\sim g_{2} \desda g_{1}^{-1}g_2 \in H$. This means that
for $
g_{1}=(\mathbf{x}_{1},\mathbf{R}_{1})$, $g_{2}=(\mathbf{x}_{2},\mathbf{R}_{2})$ one has
\[
g_{1}\sim g_{2} \desda \mathbf{x}_{1}=\mathbf{x}_{2} \textrm{ and }\exists_{\alpha \in [0,2\pi)}\;:\; \mathbf{R}_{1}= \mathbf{R}_{2} \mathbf{R}_{\mathbf{a},\alpha}.
\]
The equivalence classes $[g]=\{g' \in SE(3)\; |\; g'\sim g\}$ are usually just denoted by $\mathbf{p}=(\mathbf{x},\mathbf{n})$ as they consist of all rigid body motions
$g=(\mathbf{x},\mathbf{R}_{\mathbf{n}})$ that map reference point $(\mathbf{0},\mathbf{a})$ onto $(\mathbf{x},\mathbf{n}) \in \R^3 \rtimes S^2$\ :
\begin{equation}\label{eq:qoutfromgroup}
g \odot (\mathbf{0},\mathbf{a})=(\mathbf{x},\mathbf{n}).
\end{equation}
% where we recall $\mathbf{R}_{\mathbf{n}}$ is \emph{any} rotation that %maps $\mathbf{a} \in S^2$ onto $\mathbf{n} \in S^2$.
On tangent bundle
$T(\mathbb{M})=\{(\bp,\dot{\bp})\;|\; \bp \in \mathbb{M}, \dot{\bp} \in T_{\bp}(\mathbb{M})\}$ we set metric tensor:
\begin{equation} \label{metric}
\begin{array}{l}
\left.\mathcal{G}_{\gothic{e}}\right|_{\mathbf{p}}(\dot{\mathbf{p}},\dot{\mathbf{p}})=  D_S^{-1}|\dot{\bx} \cdot \bn|^2 + D_{A}^{-1}
\|\dot{\bn}\|^2 + \gothic{e}^{-2} D_{S}^{-1} \|\dot{\bx} \wedge \bn \|^2, \\[5pt]
\textrm{for all }\bp=(\mathbf{x},\mathbf{n}) \in \mathbb{M}, \dot{\bp}=(\dot{\bx},\dot{\bn}) \in T_{\bp}(\mathbb{M})
\end{array}
\end{equation}
with $0< \gothic{e} \ll 1$ fixed, and with constant $D_s>0$ costs for spatial motions and
constant $D_A>0$ costs for angular motions.
For the sub-Riemannian setting ($\gothic{e}=0$) we set $\left.\mathcal{G}_{0}\right|_{\mathbf{p}}(\dot{\mathbf{p}},\dot{\mathbf{p}})=D_S^{-1}|\dot{\bx} \cdot \bn|^2 + D_{A}^{-1}
\|\dot{\bn}\|^2$ and constrain $\mathcal{G}_{0}$ to the sub-tangent bundle given by
$\{((\mathbf{x},\mathbf{n}),(\dot{\mathbf{x}},\dot{\mathbf{n}}))\;|\; \dot{\ul{x}} \wedge \ul{n}=\ul{0} \}$.

Then (\ref{metric}) sets the Riemannian (resp. sub-Riemannian) gradient:
\begin{equation} \label{grad}
\begin{array}{l}
\nabla_{\gothic{e}}\, U(\bp) = (\mathcal{G}^{-1}_{\gothic{e}} {\rm d} U)(\bp) \equiv
\\[3pt]
\left(D_{S}\, \ul{n} (\ul{n} \cdot \nabla_{\R^d} U(\bp))+\gothic{e}^2\; D_{S} (I -\ul{n} \otimes \ul{n})\nabla_{\R^d} U(\bp)\,,\, D_A \,\nabla_{S^{d\!-\!1}}U(\bp)\right)^T\!, \\[7pt]
\nabla_0 \, U(\bp) %= (\mathcal{G}^{-1}_0 \mathbb{P}_{\Delta}^*{\rm d} U)(\bp)
=
\left(\;D_{S}\, \ul{n} (\ul{n} \cdot \nabla_{\R^d} U(\bp))\;,\; D_A \,\nabla_{S^{d\!-\!1}}U(\bp)\;\right)^T,
\end{array}
\end{equation}
for all differential functions $U \in C^{1}(\mathbb{M},\mathbb{R})$.
%where  $\{\ul{n}_{1}, \ul{n}_{2}\}$ denotes some orthonormal basis for $\langle \ul{n} \rangle^{\bot} \subset %\R^{3}$.

We have particular interest for $U \in C^{1}(\mathbb{M},\mathbb{R})$ that are `orientation lifts' of input image $f:\Omega_f \to \R^{+}$. Such $U$ are compactly supported within \begin{equation} \label{omega}
\Omega = \Omega_f \times S^{d-1} \subset \mathbb{M}.
\end{equation}
% with $\left.U \right|_{\partial \Omega}=0$.
Such a lift may be (the real part of) an invertible orientation score \cite{DuitsAMS1} (cf.~Fig.~\ref{fig:intro}), a channel-representation \cite{Felsberg2}, a lift by Gabor wavelets \cite{Baspinar2018}, or a fiber orientation density \cite{portegies_improving_2015}, where in general the absolute value
$|U(\mathbf{x},\mathbf{n})|$ is a probability density of finding a fiber structure at position $\mathbf{x} \in \R^d$ with local orientation $\mathbf{n} \in S^{d\!-\!1}$.

The corresponding norm of the gradient equals
\begin{equation} \label{norm}
\|\nabla_{\gothic{e}} U(\bp)\|_{\gothic{e}}=
\sqrt{\left.\mathcal{G}_{\gothic{e}}\right|_{\bp}(\nabla_{\gothic{e}} U(\bp),\nabla_{\gothic{e}} U(\bp))}.
\end{equation}
We set the following volume form on $\mathbb{M}$:
\begin{equation}
\begin{array}{ll}
\label{volumeform}
{\rm d}\mu = D_{S}^{-1} {\rm d}\mathbf{x} \; \wedge
\;
D_{A}^{-1} {\rm d}\sigma_{S^{d-1}}.
% & \textrm{ for }\gothic{e}>0 \\
% {\rm d}\mu_{0}=
%  D_{S}^{-1} \mathbf{n} \cdot {\rm d\mathbf{x}}  \wedge
% D_{A}^{-1} {\rm d}\sigma_{S^{d-1}}
% & \textrm{ for }\gothic{e}=0
\end{array}
\end{equation}
% For $\gothic{e} = 0$ one has
% constraint $\mathbf{n} \wedge \dot{\mathbf{x}}=0$ on $T_{(\mathbf{x},\mathbf{n})}(\mathbb{M})$ and use ${\rm d}\mu_0=$.
This induces the following (sub-)Riemannian divergence
\begin{equation} \label{div}
\textrm{div}\, \mathbf{v}=
\left\{
\begin{array}{ll}
\textrm{div}_{\R^3} \mathbf{v} + \textrm{div}_{S^2}\mathbf{v} &\textrm{ for } \gothic{e}>0, \\
\ul{n} \cdot \nabla_{\R^3}(\mathbf{n} \cdot \mathbf{v}) + \textrm{div}_{S^2}\mathbf{v} &\textrm{ for } \gothic{e}=0.
\end{array}
\right.
\end{equation}
as the Lie derivative of ${\rm d}\mu$ along $\ul{v}$ is
$\mathcal{L}_{\mathbf{v}} {\rm d}\mu= (\textrm{div}\, \mathbf{v})\, \mu$.
TV on $\R^n$ is mainly built on the identity
$\nabla \cdot (f \mathbf{v})= f \; \nabla \cdot \mathbf{v} + \nabla f \cdot \mathbf{v}$. Similarly on
$\mathbb{M}$ one has:
\begin{equation} \label{divfv}
\textrm{div}( U \mathbf{v})(\mathbf{p})= U(\mathbf{p}) \, \textrm{div}\, \mathbf{v}(\mathbf{p}) + \left.\mathcal{G}_{\gothic{e}} \right|_{\mathbf{p}}\left(\nabla_{\gothic{e}} U(\bp), \mathbf{v}(\bp) \right),
\end{equation}
for all $\mathbf{p} \in \mathbb{M}$,
from which we deduce the following integration by parts formula:
\begin{equation} \label{intparts}
\int \limits_{\Omega} U(\mathbf{p}) \, \textrm{div}\, \mathbf{v}(\mathbf{p})\; {\rm d}\mu(\mathbf{p})= \int \limits_{\Omega} \left.
\mathcal{G}_{\gothic{e}} \right|_{\mathbf{p}}\left(\nabla_{\gothic{e}} U(\bp), \mathbf{v}(\bp) \right)\; {\rm d}\mu(\mathbf{p}),
\end{equation}
for all $U \in C^{1}(\Omega)$ and all smooth vector fields $\mathbf{v}$ vanishing at the boundary $\partial \Omega$. This formula allows us to build a weak formulation of TVF on $\mathbb{M}$.
%=\{U \in C^{1}(\Omega)\;|\; \left.U\right|_{\partial \Omega}=0\}$.
\begin{definition}(weak-formulation of TVF on $\mathbb{M}$) \\
Let $U \in BV(\Omega)$ a function of bounded variation. Let $\chi_0(\Omega)$ denote the vector space of smooth vector fields
that vanish at the boundary $\partial \Omega$.
Then we define
\begin{equation} \label{start}
\begin{array}{ll}
TV_{\varepsilon}(U) &:= \sup \limits_{
{\scriptsize \begin{array}{c}
\psi \in C^{\infty}_c(\Omega) \\
\mathbf{v}
 \in \chi_0(\Omega) \\
 \|\mathbf{v}(\mathbf{p})\|^2_{\gothic{e}} + |\psi(\mathbf{p})|^2\leq 1
 \end{array}
}} \int \limits_{\Omega}\;\,
{\small
\left(
\begin{array}{c}
\varepsilon \\
U(\mathbf{p})
\end{array}
\right)
\cdot
\left(
\begin{array}{c}
\psi(\mathbf{p}) \\
{\rm \textrm{div}}\, \mathbf{v}(\mathbf{p})
\end{array}
\right)}\;
{\rm d}\mu(\mathbf{p})
\end{array}
\end{equation}
\end{definition}
For all $U \in BV(\Omega)$ we have $TV_0(U) \leq TV_{\varepsilon}(U) \leq
TV_{0}(U) + \varepsilon |\Omega|$.
% For $\gothic{e} = 0$, one could constrain $\mathbf{v}=(\mathbf{v}^S,\mathbf{v}^A)$ to
% $\ul{n} \wedge \ul{v}^s(\ul{x},\ul{n})=\ul{0}$, as it would not affect $TV_{\epsilon}(U)$.
\begin{lemma} \label{lemma:1}
Let $\varepsilon, \gothic{e} \geq 0$.
For $U \in C^{1}(\Omega,\mathbb{R})$  we have
\begin{equation} \label{result}
TV_{\varepsilon}(U)= \int_{\Omega} \sqrt{\|\nabla_{\gothic{e}}U(\mathbf{p})\|^2_{\gothic{e}} + \varepsilon^2}\;\, {\rm d}\mu(\mathbf{p}).
\end{equation}
For $U \in C^{2}(\mathbb{M},\R)$ and $\gothic{e}, \varepsilon>0$ we have $\partial \, TV_{\varepsilon}(U)= \mathrm{div} \circ
\left( \frac{\nabla_{\gothic{e}}(U)}{\sqrt{\|\nabla_{\gothic{e}}(U)\|^2_{\gothic{e}}+\varepsilon^2}}
\right)$.
\end{lemma}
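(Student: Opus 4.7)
My plan is to reduce part one to a pointwise optimization via integration by parts and a fibrewise Cauchy--Schwarz inequality, and to obtain part two by a direct variational calculation using the same identity (\ref{intparts}). Fix $U \in C^{1}(\Omega,\R)$ and an admissible test pair $(\psi,\mathbf{v})$ as in (\ref{start}). Applying (\ref{intparts}), and absorbing any sign ambiguity into the substitution $\mathbf{v}\mapsto \pm\mathbf{v}$ which preserves admissibility, rewrites the integrand of (\ref{start}) pointwise as $\varepsilon\psi(\bp)+ \left.\mathcal{G}_{\gothic{e}}\right|_{\bp}(\nabla_{\gothic{e}}U(\bp),\mathbf{v}(\bp))$. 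Cauchy--Schwarz in the orthogonal sum $\R\oplus T_{\bp}\mathbb{M}$ endowed with norm $\sqrt{\psi^{2}+\|\mathbf{v}\|^{2}_{\gothic{e}}}$ bounds this pointwise by
\[
\varepsilon\psi+\left.\mathcal{G}_{\gothic{e}}\right|_{\bp}(\nabla_{\gothic{e}}U,\mathbf{v})\;\leq\; \sqrt{\varepsilon^{2}+\|\nabla_{\gothic{e}}U(\bp)\|^{2}_{\gothic{e}}}\cdot \sqrt{\psi^{2}+\|\mathbf{v}\|^{2}_{\gothic{e}}}\;\leq\; \sqrt{\varepsilon^{2}+\|\nabla_{\gothic{e}}U(\bp)\|^{2}_{\gothic{e}}},
\]
so $TV_{\varepsilon}(U)\leq \int_\Omega \sqrt{\varepsilon^{2}+\|\nabla_{\gothic{e}}U\|^{2}_{\gothic{e}}}\,d\mu$.

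For the reverse inequality I would use the equality case of Cauchy--Schwarz with
\[
\psi^{*}(\bp)=\frac{\varepsilon}{\sqrt{\varepsilon^{2}+\|\nabla_{\gothic{e}}U(\bp)\|^{2}_{\gothic{e}}}}, \qquad \mathbf{v}^{*}(\bp)=\frac{\nabla_{\gothic{e}}U(\bp)}{\sqrt{\varepsilon^{2}+\|\nabla_{\gothic{e}}U(\bp)\|^{2}_{\gothic{e}}}},
\]
which are continuous by $U\in C^{1}$ and pointwise saturate the constraint. A standard $SE(d)$-equivariant mollification combined with a boundary cut-off produces an approximating sequence $(\psi_{k},\mathbf{v}_{k})\in C_{c}^{\infty}(\Omega)\times \chi_{0}(\Omega)$; rescaling by $(1+\delta_{k})^{-1}$ with $\delta_{k}\to 0^+$ restores the pointwise constraint $|\psi_{k}|^{2}+\|\mathbf{v}_{k}\|^{2}_{\gothic{e}}\leq 1$, and dominated convergence delivers the matching lower bound, completing part one.

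For part two, with $U\in C^{2}(\mathbb{M},\R)$ and $\gothic{e},\varepsilon>0$, the integrand $\sqrt{\|\nabla_{\gothic{e}}U\|^{2}_{\gothic{e}}+\varepsilon^{2}}$ is $C^{1}$ in $U$ and bounded below by $\varepsilon$, so $TV_{\varepsilon}$ is Fr\'echet differentiable at $U$ and the subdifferential is a singleton. For a test perturbation $\phi\in C_{c}^{\infty}(\Omega)$, differentiating under the integral sign yields
\[
\left.\frac{d}{dt}\right|_{t=0} TV_{\varepsilon}(U+t\phi)
=\int_{\Omega}\frac{\left.\mathcal{G}_{\gothic{e}}\right|_{\bp}\!(\nabla_{\gothic{e}}U(\bp),\nabla_{\gothic{e}}\phi(\bp))}{\sqrt{\|\nabla_{\gothic{e}}U(\bp)\|^{2}_{\gothic{e}}+\varepsilon^{2}}}\, d\mu(\bp).
\]
Applying (\ref{intparts}) to the $C^{1}$ vector field $\mathbf{w}=\nabla_{\gothic{e}}U/\sqrt{\|\nabla_{\gothic{e}}U\|^{2}_{\gothic{e}}+\varepsilon^{2}}$ and the test function $\phi$ rewrites this integral as $\int_{\Omega}\phi\,\mathrm{div}(\mathbf{w})\,d\mu$, identifying $\mathrm{div}(\mathbf{w})$ as the $L^{2}(\Omega,d\mu)$-gradient and hence $\partial TV_{\varepsilon}(U)$ in the Brezis--Komura sense.

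The main obstacle I foresee is the approximation step in part one, in particular in the sub-Riemannian case $\gothic{e}=0$: the approximants $\mathbf{v}_{k}$ must simultaneously lie in $\chi_{0}(\Omega)$, respect the pointwise inequality after smoothing, and belong to the sub-tangent bundle $\{\dot{\bx}\wedge\bn=\mathbf{0}\}$, since otherwise $\|\mathbf{v}_{k}\|_{0}=+\infty$. A naive convolution on $\mathbb{M}$ breaks pointwise parallelism of the spatial part of $\mathbf{v}_{k}$ with $\bn$. The remedy is to mollify along left-invariant sub-tangent vector fields compatible with (\ref{grad})---or equivalently convolve on $SE(d)$ and then project the spatial component onto $\bn(\bn\cdot\,)$---and absorb any resulting norm excess by the $(1+\delta_{k})^{-1}$ rescaling before invoking dominated convergence.
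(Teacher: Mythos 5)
Your proposal is correct and follows essentially the same route as the paper: substitute the integration-by-parts identity (\ref{intparts}) into the definition (\ref{start}), apply pointwise Cauchy--Schwarz on $\R \times T_{\mathbf{p}}(\mathbb{M})$ for the upper bound, smoothly approximate the saturating pair $(\varepsilon,\nabla_{\gothic{e}}U)/\sqrt{\varepsilon^2+\|\nabla_{\gothic{e}}U\|_{\gothic{e}}^2}$ for the lower bound, and compute the G\^ateaux derivative plus one more integration by parts for the second claim. You actually supply more detail than the paper does (the rescaling by $(1+\delta_k)^{-1}$ and the care needed to keep the mollified approximants in the sub-tangent bundle when $\gothic{e}=0$), which is a genuine and worthwhile refinement of the approximation step the paper passes over in one sentence.
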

\begin{proof}
First we substitute (\ref{intparts}) into (\ref{start}), then we apply Gauss theorem and use $\left. U\mathbf{v} \right|_{\partial \Omega}=0$. Then we apply Cauchy-Schwarz  on $V_{\mathbf{p}}:=\R \times T_{\mathbf{p}}(\mathbb{M}) $ for each $\mathbf{p} \in \mathbb{M}$, with inner product $(\epsilon_1, \ul{v}_1) \cdot (\epsilon_2, \ul{v}_2)= \epsilon_1 \epsilon_2 + \mathcal{G}_{\ul{p}}(\ul{v}_{1},\ul{v}_{2})$, which holds with equality iff the vectors are linearly dependent. Therefore we smoothly approximate $\frac{1}{\sqrt{\varepsilon^2+ \|\nabla_{\gothic{e}}U\|^2_{\gothic{e}}}}(\varepsilon , \nabla_{\gothic{e}}U)$ by $(\psi,\mathbf{v})$ to get
(\ref{result}). For $U \in C^{2}(\Omega,\R)$, $\delta \in C_{c}^{\infty}(\Omega,\R)$
we %use (\ref{intparts}) to
get
$
(\partial \, TV_{\varepsilon}(U), \delta)_{\mathbb{L}_{2}(\Omega)}=\lim \limits_{h \downarrow 0} \frac{TV_{\varepsilon}(U+h\, \delta)-TV_{\varepsilon}(U)}{h} \overset{(10)}{=} (\textrm{div} \circ
\left( \frac{\nabla_{\gothic{e}}(U)}{\sqrt{\|\nabla_{\gothic{e}}(U)\|^2_{\gothic{e}}+\varepsilon^2}}
\right),\delta)_{\mathbb{L}_{2}(\Omega)}$.
\end{proof}
% \begin{remark}
% For sufficiently regular $U$ and compactly supported test-functions $\delta \in C_{c}^{\infty}(\mathbb{M})$
% we have a finite limit
% $\lim_{h \downarrow 0} \frac{TV_{\varepsilon}(U+h\, \delta)-TV_{\varepsilon}(U)}{h}=
% (\partial \, TV_{\varepsilon}(U), \delta)_{\mathbb{L}_{2}(\mathbb{M})}$. Integration by parts reveals that  % where $J_{\tau}(U)= \argmin$
% \end{remark}
\subsection{
Total-Roto Translation Variation, Mean Curvature Flows on $\mathbb{M}$}
%$\R^d\rtimes S^{d-1}$}

Henceforth, we fix $\gothic{e}\geq 0$ and write $\nabla=\nabla_{\gothic{e}}$. We propose the following roto-translation equivariant enhancement PDEs on $\Omega\subset \mathbb{M}$, recall (\ref{omega}):
\begin{equation} \label{PDE}
\left\{
\begin{array}{rcl}
\frac{\partial W^{\varepsilon}}{\partial t}(\bp,t) &=&
\left(\|{\rm \nabla} W^{\varepsilon}(\bp,t)\|^2 + \varepsilon^2\right)^\frac{a}{2}
\left(\textrm{div}\, \circ \frac{\nabla W^{\varepsilon}(\cdot,t)}{
\left(
\|\nabla W^{\varepsilon}(\cdot,t)\|^2_{\gothic{e}} +
\varepsilon^2
\right)^{\frac{b}{2}}}
\right)(\bp),  \\[12pt] % (\mathbf{x},\mathbf{n}) \in \mathbb{M} \\
 0&=&\mathbf{N}(\mathbf{x}) \cdot \nabla_{\mathbb{R}^d} W^{\varepsilon}(\bx,\bn,0) \qquad \ \ \bp=(\mathbf{x},\mathbf{n}) \in \partial \Omega, \\[6pt]
W^{\varepsilon}(\bp,0)&=& U(\bp) \qquad \qquad \qquad \qquad \qquad\, \bp=(\mathbf{x},\mathbf{n}) \in \Omega,
\end{array}
\right. \,
\end{equation}
with evolution time $t\geq 0$, $0<\varepsilon \ll 1$, and
with parameters $a,b \in \{0,1\}$. Regarding the boundary of $\Omega$ we note that $\bp=(\mathbf{x},\mathbf{n}) \in \partial \Omega \Leftrightarrow \mathbf{x} \in \partial \Omega_f, \mathbf{n} \in S^2$. We use Neumann boundary conditions as $\ul{N}(\ul{x})$ denotes the normal at $\mathbf{x} \in \partial \Omega_f$.
% Recall that $U$ is compactly supported within $\Omega \subset \mathbb{M}$, so that $W^{\varepsilon}(\mathbf{p},t) \to 0$ as $\|\mathbf{p}\| \to \infty$.

For $\{a,b\}=\{1,1\}$ we have a
% geometric PDE
% that is invariant under monotonic co-domain transformations; it is the case
geometric Mean Curvature Flow (MCF) PDE.
% where each of the level sets follow a mean curvature %flow, see the remark below.
For $\{a,b\}=\{0,1\}$ we have a Total Variation Flow (TVF) \cite{Chambolle}.
For $\{a,b\}=
\{0,0\}$ we obtain a linear diffusion for which exact smooth solutions exist \cite{Portegies}.
\begin{remark}
By the product rule (\ref{divfv}) the right-hand side  of (\ref{PDE}) for $\varepsilon \downarrow 0$ becomes
\begin{equation} \label{PDE2}
\frac{\partial W^{0}}{\partial t}= \|\nabla W^0\|^{a-b}\Delta W^0 + 2b \, \overline{\kappa}_{I}\; \|\nabla W^0\|^{a},
\end{equation}
with the mean curvature $\overline{\kappa}_{I}(\bp,t)$ of level set
$\{\mathbf{q} \in \mathbb{M}\;|\; W^{0}(\mathbf{q},t)=W^{0}(\bp,t)\}$, akin to \cite[ch;3.2]{sapiro_geometric_2006},
and with (possibly hypo-elliptic) Laplacian $\Delta=\textrm{div} \circ \nabla $.
\end{remark}
\begin{remark}
For MCF and TVF smooth solutions to the PDE (\ref{PDE}) exist only under special circumstances. This lack of regularity is an advantage in image processing
to preserve step-edges and plateaus in images, yet it forces us to define a concept of weak solutions. Here, we distinguish between MCF and TVF:

For MCF one relies on viscosity solution theory developed by Evans-Spruck \cite{Evans-Motion-1991}, see also \cite{Giga-Generalized-1991,Sato-Interface-1994} for the case of MCF with Neumann boundary conditions. % to deal with the Neumann boundary conditions.
 In \cite[Thm 3.6]{Sanguinetti} existence of $C^{1}$-viscosity solutions is shown for $d=2$. %uniqueness is shown in %\cite{BaspinarCitti}.

For TVF we will rely on gradient flow theory by Brezis-Komura~\cite{Brezis,Ambrosio}.
%,
%ChecK Giovanna
\end{remark}
\begin{remark}
Convergence of the solutions w.r.t. $\gothic{e} \downarrow 0$ is clear from the exact solutions for $\{a,b\}=\{0,0\}$, see \cite[ch:2.7]{Portegies}, and is also  addressed for  MCF \cite{Sanguinetti,Baspinar-Minimal-2018}.
For TVF one can rely on \cite{duitsmeestersmirebeauportegies}.
Next we focus on convergence results for $\varepsilon \downarrow 0$.
\end{remark}

\subsection{Gradient-Flow formulations and Convergence}

The total variation flow can be seen as a gradient flow of a lower-semicontinuous, convex functional in a Hilbert space, as we explain next.
% , for which there is the well-established Brezis-Komura theory, the essential elements of which are as follows.

If $F: H \to [0,\infty]$ is a proper (i.e. not everywhere equal to infinity), lower semicontinuous, convex functional on a Hilbert space $H$, the subdifferential of $F$ in a point $u$ in the finiteness domain of $F$ is defined as
\[
\partial F (u) := \left\{ z \in H | \ (z, v-u) \leq F(v) - F(u) \text{ for all } v \in H \right\}.
\]
The subdifferential is closed and convex, and thereby it has an element of minimal norm, called ``the gradient of $F$ in $u$'' denoted by $\mathrm{grad} F(u)$.
Let $u_0$ be in the closure of the finiteness domain of $F$.
By Brezis-Komura theory, \cite{Brezis}, \cite[Thm 2.4.15]{Ambrosio}
there is a unique locally absolutely continuous curve $u: [0,\infty) \to H$ s.t.
\[
- u'(t) = \mathrm{grad} F(u(t)) \text{ for a.e. } t > 0 \text{ and } \lim_{t \downarrow 0} u(t) = u_0.
\]
We call $u:[0,\infty) \to H$ the gradient flow of $F$ starting at $u_0$.

The function $TV_\epsilon: L^2(\Omega) \to [0,\infty]$ is lower-semicontinuous and convex for every $\epsilon \geq 0$.  This allows us to generalize solutions to the PDE (\ref{PDE}) as follows:
\begin{definition}
Let $U \in \Xi:= BV(\Omega) \cap \mathbb{L}_{2}(\Omega)$. We define by $t \mapsto W^\epsilon(\cdot, t)$ the gradient flow of $TV_\epsilon$ starting at $U$.
\end{definition}
\begin{remark}
A smooth solution $W^\epsilon$ to (\ref{PDE}) with $\{a,b\}=\{0,1\}$ is a gradient flow. % in the sense of the previous definition.
\end{remark}
% \begin{remark}
% One cannot make a gradient flow formulation for the MC Flow...See notes discussion Mark?!
% Contradicts lecture notes \emph{Lectures on Mean Curvature Flow and Stability} by I.M. Sigal University of Toronto, that say it is possible.
% \end{remark}
\begin{theorem} \label{th:conv} (strong $\mathbb{L}_{2}$-convergence, stability and accuracy of TV-flows) \\
Let $U \in \mathbb{L}_{2}(\Omega)$ and let $W^{\varepsilon}$ be the
%solution of the PDE system (\ref{PDE}) for $\{a,b\}=\{0,1\}$
gradient flow of $TV_{\varepsilon}$ starting at $U$ and $\varepsilon,\gothic{e} \geq 0$. Let $t\geq 0$. Then
\[
\lim \limits_{\varepsilon \downarrow 0} W^{\varepsilon}(\cdot,t) = W^{0}(\cdot,t)\textrm{ in }\mathbb{L}_{2}(\Omega).
\]
More precisely, for $U \in BV(\Omega)$, we have for all $t \geq 0$:
\[
\|W^\varepsilon(\cdot, t) - W^0(\cdot, t)\|_{\mathbb{L}_{2}(\Omega)} \leq 8 \Big(\|U\|_{L^2(\Omega)} (TV_0(U)+\delta) \delta t^2\Big)^{1/5} \textrm{ with }\delta=\varepsilon |\Omega|
\]
%\[
%\begin{array}{ll}
%\|W^{\varepsilon}(\cdot,t)-W^{0}(\cdot,t)\|_{\mathbb{L}_{2}(\mathbb{M})} &\leq
%\|W^{\varepsilon}(\cdot,t)-W^{\varepsilon}_N(\cdot,t)\| +
%\|W^{\varepsilon}_N(\cdot,t)-W^{0}_N(\cdot,t)\| \\
%+ \ \
% \|W^{0}_N(\cdot,t)-W^{0}(\cdot,t)\|
%&\leq \sqrt{2}\frac{t}{N} |\partial TV_{\varepsilon}(U)|+ 2\sqrt{N \varepsilon t} \, |\Omega|
% \end{array}
%\]
%for `the implicit scheme' approximator $W_N^{\varepsilon}(\cdot,t)=(J_{\frac{t}{N}}^{\varepsilon})^N (U)$
%where \\
%$
%J_{\tau}^{\varepsilon}(U)=\underset{V \in \mathbb{L}_{2}(\mathbb{M})}{\textrm{argmin}}
%\left\{ \frac{1}{2\tau} \|U-V\|^{2}_{\mathbb{L}_{2}(\mathbb{M})} +TV_{\varepsilon}(V)
%\right\}
%$.
\end{theorem}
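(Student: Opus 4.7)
The plan is to exploit Brezis--Komura gradient-flow theory \cite{Brezis,Ambrosio} together with the uniform functional closeness $TV_0(U)\le TV_\varepsilon(U)\le TV_0(U)+\delta$ stated just after Definition 1. Since $TV_\varepsilon:\mathbb{L}_{2}(\Omega)\to[0,\infty]$ is a proper, lower-semicontinuous, convex functional for every $\varepsilon\ge 0$, the flows $W^\varepsilon$ and $W^0$ are locally absolutely continuous curves starting from the common datum $U$ and satisfying $-\dot W^\varepsilon(t)\in\partial TV_\varepsilon(W^\varepsilon(t))$ and $-\dot W^0(t)\in\partial TV_0(W^0(t))$ for a.e.\ $t>0$. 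I would derive three complementary $\mathbb{L}_{2}$-bounds for $W^\varepsilon(t)-W^0(t)$ and multiply them to obtain the stated fifth-root inequality.

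Bound A (EVI-doubling) is the heart of the argument. Testing the subgradient inequality for $TV_\varepsilon$ at $W^\varepsilon(t)$ against the point $W^0(t)$, and the one for $TV_0$ at $W^0(t)$ against $W^\varepsilon(t)$, and adding them, makes the left-hand side equal to $\tfrac{d}{dt}\tfrac{1}{2}\|W^\varepsilon(t)-W^0(t)\|_{\mathbb{L}_{2}}^{2}$ by the Hilbert-space chain rule, while the right-hand side telescopes to $[TV_\varepsilon(W^0(t))-TV_0(W^0(t))]+[TV_0(W^\varepsilon(t))-TV_\varepsilon(W^\varepsilon(t))]\le \delta+0=\delta$ by the two-sided closeness. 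Integrating from $t=0$ (where both flows equal $U$) yields $\|W^\varepsilon(t)-W^0(t)\|_{\mathbb{L}_{2}}^{2}\le 2\delta t$. Bound B (energy dissipation) follows from the Brezis identity $\int_0^t\|\dot W^\varepsilon(s)\|^2\,ds=TV_\varepsilon(U)-TV_\varepsilon(W^\varepsilon(t))\le TV_0(U)+\delta$; Cauchy--Schwarz gives $\|W^\varepsilon(t)-U\|_{\mathbb{L}_{2}}\le \sqrt{t(TV_0(U)+\delta)}$ and the triangle inequality then $\|W^\varepsilon(t)-W^0(t)\|_{\mathbb{L}_{2}}^{2}\le 4t(TV_0(U)+\delta)$. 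Bound C (contraction toward constants) uses that constants are global minimizers of $TV_\varepsilon$, since $TV_\varepsilon(u)\ge \int\varepsilon\,d\mu=\delta$; consequently $W^\varepsilon_{0}\equiv 0$ and the $\mathbb{L}_{2}$-nonexpansiveness of the semigroup yields $\|W^\varepsilon(t)\|_{\mathbb{L}_{2}}\le \|U\|_{\mathbb{L}_{2}}$, hence $\|W^\varepsilon(t)-W^0(t)\|_{\mathbb{L}_{2}}\le 2\|U\|_{\mathbb{L}_{2}}$.

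Multiplying the three bounds gives $\|W^\varepsilon(t)-W^0(t)\|_{\mathbb{L}_{2}}^{5}\le 16\,\|U\|_{\mathbb{L}_{2}}(TV_0(U)+\delta)\,\delta\, t^{2}$, whose fifth root is the announced bound with constant $16^{1/5}<8$. For general $U\in\mathbb{L}_{2}(\Omega)$ without a BV hypothesis, the quantitative estimate degenerates, but the bare strong-convergence claim follows from density of $BV(\Omega)\cap \mathbb{L}_{2}(\Omega)$ in $\mathbb{L}_{2}(\Omega)$ combined with the $\mathbb{L}_{2}$-nonexpansiveness of each gradient-flow semigroup: approximate $U$ by $U_n\in BV\cap\mathbb{L}_{2}$, split $W^\varepsilon_U(t)-W^0_U(t)$ through $W^\varepsilon_{U_n}(t)$ and $W^0_{U_n}(t)$, bound the outer differences by $\|U-U_n\|_{\mathbb{L}_{2}}$, and send first $\varepsilon\downarrow 0$ (using the quantitative BV-bound for $U_n$) and then $n\to\infty$.

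The main obstacle is the rigorous justification of step A: one must verify that the two subgradient inclusions hold simultaneously on a common set of full measure in $(0,\infty)$, that $t\mapsto\tfrac{1}{2}\|W^\varepsilon(t)-W^0(t)\|_{\mathbb{L}_{2}}^{2}$ is locally absolutely continuous with derivative given by the Hilbert-space chain rule, and that $W^\varepsilon(t)$ and $W^0(t)$ lie in the finiteness domains of $TV_\varepsilon$ and $TV_0$ respectively so that the subgradient inequalities are nonvacuous. All of these are standard consequences of Brezis--Komura theory \cite[Thm 2.4.15]{Ambrosio}, but the simultaneous bookkeeping with two different convex functionals is where the care is required; once A is secured, the derivation of bounds B and C and the fifth-root combination is routine.
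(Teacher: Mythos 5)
Your proof is correct, but it follows a genuinely different route from the paper's. The paper works at the level of the discrete minimizing-movement scheme: it proves a stability lemma for $1/\tau$-convex functionals, compares the resolvents $J_\tau^F$ and $J_\tau^G$ step by step using their non-expansiveness, invokes the a priori error estimate of Ambrosio--Gigli--Savar\'e to pass from the discrete iterates to the continuous flows (which requires a slope bound $|\partial F|(u_0)\leq L$), and then removes the slope hypothesis by running the flow for a short time $s$, using the regularization estimate $|\partial F|(u(s))\leq \|u_0-u^*\|/s$ and the EVI bound $\|u(s)-u_0\|\leq\sqrt{2sF(u_0)}$, and optimizing over $s$; this is why the paper's abstract statement carries the restriction $t\leq E^6M^6/\delta^9$. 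You instead stay entirely at the continuous level: your Bound A is a direct Gronwall/doubling argument on the two subdifferential inclusions, which immediately yields $\|W^\varepsilon(t)-W^0(t)\|\leq\sqrt{2\delta t}$ --- a cleaner and, for all $t\leq 8M^2E^2/\delta^3$, strictly sharper estimate than the paper's $t^{2/5}$ bound --- and your Bounds B and C (energy dissipation plus contraction toward the constant minimizer $0$; your observation that $TV_\varepsilon(u)\geq\varepsilon\mu(\Omega)$ with equality for constants is the right justification) let you recover exactly the stated fifth-root form by multiplying $X^2\cdot X^2\cdot X$, with the better constant $16^{1/5}<8$ and, notably, without any upper restriction on $t$, matching the ``for all $t\geq0$'' claim of the theorem more faithfully than the paper's own argument. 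The technical caveats you flag for Bound A (simultaneous a.e.\ validity of the two inclusions, finiteness of $TV_0(W^0(t))$ and $TV_\varepsilon(W^\varepsilon(t))$ for $t>0$ via the instantaneous regularization into the domain of the subdifferential, local absolute continuity of $t\mapsto\|W^\varepsilon(t)-W^0(t)\|^2$ on $(0,\infty)$ plus continuity at $t=0$) are indeed the points requiring care and are all supplied by Brezis--Komura theory, so there is no gap; your density argument for the bare convergence claim when $U\in\mathbb{L}_2\setminus BV$ also fills in a step the paper leaves implicit. In short: the paper buys its result from the discrete approximation machinery, while you get a stronger result more directly from the continuous semigroup properties.
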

Theorem \ref{th:conv} follows from the following general result, if we take $F = TV_0$, $G=TV_\epsilon$, $\delta = \epsilon |\Omega|$.
\begin{theorem}
\label{th:main-abstract}
Let $F:H \to [0,\infty]$ and $G:H \to [0,\infty]$ be two proper, (i.e. not everywhere equal to infinity),
lower semi-continuous, convex functionals on a Hilbert space $H$, such that
\[
F(u) - \delta \leq G(u) \leq F(u) + \delta
\]
for all $u \in H$. Let $u_0, v_0 \in H$ be such that $F(u_0) \leq E$ and $G(v_0) \leq E$ and $\|u_0\| \leq M$, $\|v_0\| \leq M$. The gradient flow $u:[0,\infty) \to H$ of $F$ starting at $u_0$, and the gradient flow $v:[0,\infty) \to H$ of $G$ starting at $v_0$ satisfy
\[
\| u(t) - v(t) \|_{H} \leq 16 (M E \delta t^2)^{1/5} + \| u_0 - v_0 \|_H
\]
for all $0 \leq t \leq E^6 M^6 / \delta^9$.
\end{theorem}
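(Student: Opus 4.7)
The plan is to combine the subdifferential characterization of convex gradient flows with the a priori energy and norm bounds supplied by the hypotheses, and then to interpolate them to produce the claimed $(ME\delta t^2)^{1/5}$ rate.

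First I would derive a ``crude dissipation'' estimate. Since $-u'(t)\in\partial F(u(t))$ and $-v'(t)\in\partial G(v(t))$, testing each subdifferential against the position of the other flow yields $(u'(t),u(t)-v(t))\leq F(v(t))-F(u(t))$ and $(v'(t),v(t)-u(t))\leq G(u(t))-G(v(t))$. Adding these and using $|F-G|\leq\delta$ collapses the right-hand side to $[F(v)-G(v)]+[G(u)-F(u)]\leq 2\delta$, so $\tfrac12\tfrac{d}{dt}\|u-v\|_H^2\leq 2\delta$, and integration yields the basic bound $\|u(t)-v(t)\|_H^2\leq\|u_0-v_0\|_H^2+4\delta t$. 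This alone already implies the claimed estimate for $t$ up to roughly $M^2E^2/\delta^3$; the substantive work lies in the complementary large-$t$ regime.

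Next I would collect the regularity inherent in gradient flows of convex functionals. The energy identity $\int_0^t\|u'(s)\|_H^2\,ds=F(u_0)-F(u(t))\leq E$ together with Cauchy--Schwarz gives $\|u(t)-u_0\|_H\leq\sqrt{Et}$ (and analogously for $v$), hence $\|u(t)\|_H,\|v(t)\|_H\leq M+\sqrt{Et}$; monotonicity of $\|u'(\cdot)\|_H$ along a convex gradient flow yields the pointwise speed bound $\|u'(t)\|_H\leq\sqrt{E/t}$. I would then interpolate by writing $\|u-v\|_H^5=\|u-v\|_H^2\cdot\|u-v\|_H^3$: the first factor is controlled by the crude dissipation bound $4\delta t$, and the second factor by a quantity of order $MEt$ obtained by combining the $L^\infty$-norm bound on $u,v$ with the $L^2$-speed bound via a Young-type inequality. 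This yields $\|u-v\|_H^5\leq C\cdot ME\delta t^2$ for a universal constant $C$, and hence the claimed rate after extracting fifth roots; the restriction $t\leq E^6M^6/\delta^9$ is precisely the regime in which this interpolation stays sharper than the crude bound. Finally the initial-data term $\|u_0-v_0\|_H$ is reinstated via non-expansiveness of the $F$-semigroup: comparing $u$ with the $F$-flow started at $v_0$ is $\|u_0-v_0\|_H$-close to $u$, so one finishes by the triangle inequality.

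I expect the main obstacle to be the auxiliary cubic bound $\|u-v\|_H^3\lesssim MEt$. A direct triangle-plus-displacement estimate of the form $(M+\sqrt{Et})^3$ has the wrong scaling in both asymptotic regimes $Et\gg M^2$ and $Et\ll M^2$, so a more delicate argument is required --- probably an evolution-variational-inequality identity with time-dependent test points, marrying the $L^\infty$-size of the flows with their $L^2$-speed so that the product $MEt$ emerges naturally. Tracking constants carefully through the optimization should produce the prefactor $16=2^4$ in the final bound.
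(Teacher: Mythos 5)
Your opening dissipation estimate is correct and is a genuinely different (and in one respect stronger) tool than anything in the paper: from $-u'\in\partial F(u)$, $-v'\in\partial G(v)$ and the two-sided bound one does get $\tfrac12\tfrac{d}{dt}\|u-v\|^2\leq 2\delta$, hence $\|u(t)-v(t)\|\leq\|u_0-v_0\|+2\sqrt{\delta t}$, which indeed covers the claimed estimate for $t\lesssim M^2E^2/\delta^3$ (and, for fixed $t$ and $\delta\downarrow 0$, even beats the $\delta^{1/5}$ rate). The paper instead works entirely through the minimizing-movement scheme: a $1/\tau$-convexity stability lemma shows one implicit Euler step of $F$ and of $G$ differ by $2\sqrt{\delta\tau}$, iteration gives $2n\sqrt{\delta\tau}$, the a priori error estimate $\|u(t)-(J^F_{t/n})^n[u_0]\|\leq Lt/(\sqrt2 n)$ converts this to a continuous-time bound $8(L\delta t^2)^{1/3}$ under a slope bound $L$ at the initial data, and finally the regularizing estimate $|\partial F|(u(s))\leq M/s$ together with $\|u(s)-u_0\|\leq\sqrt{2sE}$ removes the slope hypothesis by optimizing over a warm-up time $s$. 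Your route and the paper's are therefore disjoint up to this point, and yours is fine as far as it goes.

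The proof, however, does not close: everything in the regime $t\gtrsim M^2E^2/\delta^3$ hinges on the auxiliary bound $\|u(t)-v(t)\|^3\lesssim MEt$, and this is precisely the step you leave unproved. The estimates you actually have at your disposal give $\|u(t)-v(t)\|\leq\|u_0-v_0\|+\sqrt{2E t}+\sqrt{2Et}$ (from the EVI) and $\|u(t)-u^*\|\leq M$ (non-expansiveness toward the minimizer), and as you yourself observe, any product of these has the wrong scaling in both regimes $Et\gg M^2$ and $Et\ll M^2$; interpolating $\min(2\sqrt{\delta t},2\sqrt{2Et})$ produces $\delta^{1/5}E^{3/10}t^{1/2}$, which has no factor $M^{1/5}$ and the wrong power of $t$. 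The appeal to "an evolution-variational-inequality identity with time-dependent test points" is a placeholder, not an argument: the EVI with a moving reference point produces cross terms $F(v(t))-F(u(t))$ that are not controlled by $MEt$, and it is not evident that the cubic bound is even true as stated. Note also that the constant $16$ cannot be "tracked through the optimization" until that optimization exists. To repair the argument you would either have to prove the cubic bound (in which case you should state and prove it as a lemma --- it would be the real content of the theorem) or fall back on the paper's mechanism, where the missing large-time control comes from the slope-regularization $|\partial F|(u(s))\leq M/s$ fed into a discrete-scheme comparison rather than from an interpolation of norm bounds.
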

For a proof see appendix A.

\subsection{Numerics}

We implemented the PDE system (\ref{PDE}) by Euler forward time discretization, relying on standard B-spline or linear interpolation techniques for derivatives in the underlying tools of the gradient on $\mathbb{M}$ given by (\ref{grad}) and the divergence on $\mathbb{M}$ given by (\ref{div}). For details see~\!\cite{Fran2009,CreusenDuits}. Also, the explicit upperbounds for stable choices of stepsizes can be derived by the Gershgorin circle theorem, \cite{Fran2009,CreusenDuits}.

The PDE system (\ref{PDE}) can be re-expressed by a left-invariant PDE on $SE(d)$ as done in related previous works by several researchers \cite{DuitsAMS1,Fran2009,Sanguinetti,CreusenDuits,Boscain2018,Citti}. For $d=2$ this is straightforward as $SE(2) \equiv \R^{2} \rtimes S^{1}$. For $d=3$ and $\gothic{e}=0$ one has
\begin{equation} \label{corr}
\begin{array}{l}
\textrm{div}\, \mathbf{v} \leftrightarrow \mathcal{A}_{3}\tilde{v}^3 + \mathcal{A}_{3}\tilde{v}^4 + \mathcal{A}_{5}\tilde{v}^5, \
\nabla_0 W \leftrightarrow (D_{S} \mathcal{A}_{3}\tilde{W}, D_{A} \mathcal{A}_{4}\tilde{W}, D_A \mathcal{A}_{5}\tilde{W} )^T
\end{array}
\end{equation}
where $\{\mathcal{A}_i\}$ is a basis of vector fields on $SE(3)$ given by
$
(\mathcal{A}_{i}f)(g)=\lim \limits_{t \downarrow 0}\frac{f(g e^{t A_i})- f(g)}{t}
$
with a Lie algebra basis $\{A_i\}$ for $T_{e}(SE(3))$ as in \cite{DuitsPhD,CreusenDuits}, and with $\tilde{W}(\mathbf{x},\mathbf{R},t)=W(\mathbf{x},\mathbf{R} \mathbf{a},t)$, $\tilde{v}^i(\mathbf{x},\mathbf{R},t)=v^i(\mathbf{x},\mathbf{R} \mathbf{a},t)$. We used (\ref{corr}) to apply  discretization on $SE(3)$ \cite{CreusenDuits} in the software developed by Martin et al.\cite{LieAnalysis}, to our PDEs of interest (\ref{PDE}) on $\mathbb{M}$ for $d=3$.

% OPTIONAL:\\
% A Mathematica notebook relying on this package with all experiments of this paper can be found here: \url{www.lieanalysis.nl/TV2D3D.nb}.
\begin{remark}
The Euler-forward discretizations are not unconditionally stable. For $a=b=0$,
the Gerhsgorin circle theorem \cite[ch.4.2]{CreusenDuits} gives the stability bound
\[
\Delta t \leq (\Delta t)_{crit}:={\small \left(\frac{(d-1)D_A+D_S}{2h^2}+\frac{(d-1)D_A}{2h_a^2}\right)^{-1}},
\]
when using linear interpolation with spatial stepsize $h$ and angular stepsize $h_{a}$.
In our experiments, for $d=2$ we set $h=1$ and for $d=3$ we took $h_{a}=\frac{\pi}{25}$ using an almost uniform spherical sampling from a tessellated icosahedron with $N_A=162$ points.
TVF required smaller times steps when $\varepsilon$ decreases. Keeping in mind (\ref{PDE2}) but then applying the product rule (\ref{divfv}) to the case $0<\varepsilon \ll 1$, we concentrate on the first term as it is of order $\varepsilon^{-1}$ when the gradient vanishes. Then we find $\Delta t \leq \varepsilon \cdot (\Delta t)_{crit}$ for TVF. For MCF we do not have this limitation.
\end{remark}

\section{Experiments}

In our experiments, we aim to enhance contour and fiber trajectories in medical images and to remove noise.
Lifting the image $f:\R^{d} \to \R$ towards its orientation lift $U:\mathbb{M} \to \R$ defined on the
space of positions and orientations $\mathbb{M}=\R^{d} \rtimes S^{d-1}$ preserves crossings
\cite{Fran2009} and avoids leakage of wavefronts \cite{duitsmeestersmirebeauportegies}.
%To preserve elongated structures, only the sub-Riemannian setting is showed, where %$\gothic{e}=0$, as this gives best results (as it avoids transversal smoothing).

For our experiments for $d=3$ the initial condition $U: \mathbb{M} \to \R^+$ is a fiber orientation density function (FODF) obtained from DW-MRI data \cite{portegies_improving_2015}.
%TODO I would CITE D. Tournier CSD paper

For our experiments for $d=2$ the initial condition $U$ is
%(the real part of)
an invertible orientation score (OS) and we
%employ invertibility of the OS and
adopt the flows in (\ref{PDE}) via
locally adaptive frames~\cite{DuitsJanssen}.
%, and we employ invertibility of the OS to depart from the sharp original image.
%Our TVF and MCF PDE-evolutions on $\mathbb{M}$ are compared to previous diffusion %methods on $\mathbb{M}$.
For both $d=3$ (Subsection~\ref{ch:d3}) and $d=2$ (Subsection~\ref{ch:d2}),
we show advantages of TVF and MCF over crossing-preserving diffusion flows \cite{Fran2009,CreusenDuits} on $\mathbb{M}$. We set $\gothic{e}=0$ in all presented experiments as it gave better results than $\gothic{e}>0$.
%(like CED-OS \cite{Fran2009} for $d=2$, or Perona \& Malik (PM) fiber enhancement %\cite{CreusenDuits} for $d=3$).

\subsection{TVF \& MCF on $\R^3 \rtimes S^2$ for Denoising FODFs in DW-MRI \label{ch:d3}}

In DW-MRI image processing one obtains a field of angular diffusivity profiles (orientation density function) of water-molecules. A high diffusivity in particular orientation correlates to biological fibers structure, in brain white matter, along that same direction. Crossing-preserving enhancement of FODF fields $U:\mathbb{M} \to \R^+$ helps to better identify structural pathways in brain white matter, which is relevant for surgery planning, see for example \cite{Meesters,portegies_improving_2015}.

For a quantitative comparison we applied TVF, MCF and PM diffusion \cite{CreusenDuits} to denoise a popular synthetic FODF $U: \mathbb{M} \to \R^+$ from the ISBI-HARDI 2013 challenge \cite{ISBI}, with realistic noise profiles. In Fig.~\ref{fig:synthetic}, we can observe the many crossing fibers in the dataset. Furthermore, we depicted the absolute $\mathbb{L}_{2}$-error
$t \mapsto \|U - \Phi_t(U)(\cdot)\|_{\mathbb{L}_{2}(\mathbb{M})}$
as a function of the evolution parameter $t$, where $\Phi_t(U)=W_{\varepsilon}(\cdot,t)$
with optimized $\varepsilon=0.02$ in the case of TVF (in green), and MCF (in blue), and where $\Phi_t$
is the PM diffusion evolution \cite{CreusenDuits} on $\mathbb{M}$ with optimized PM parameter $K=0.2$ (in red). %Via Taylor expansion on the scalar diffusivity, we motivate that a perturbation
%of the optimal parameter with the same factor/division is reasonable for quantifying
%parameter sensitivity. Therefore,
We also depict results for $K=0.1, 0.4$ (with the dashed lines) and $\varepsilon=0.01, 0.04$.
We see that the other parameter settings provide on average worse results, justifying our %optimized
optimized parameter settings. We set $D_{S}=1.0$, $D_A=0.001$, $\Delta t=0.01$.
We observe that:
%\vspace{-0.5cm}
\begin{itemize}
\item TVF can reach lower error values than MC-flow with adequate $\Delta t=0.01$,
\item MCF provides more stable errors for all $t>0$, than TV-flow w.r.t. $\epsilon>0$,
\item TVF and MCF produce lower error values than PM-diffusion,
\item PM-diffusion provides the most variable results for all $t>0$.
\end{itemize}
\begin{figure}[!h]
\centering
  \vspace{-0.2cm}
  \includegraphics[width=0.75\textwidth]{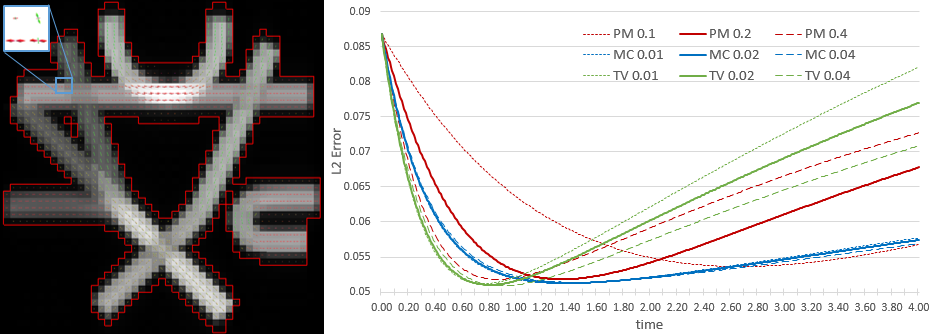}
    \vspace{-0.25cm}
  \caption{
\label{fig:synthetic}
Quantitative comparison of denoising a fiber orientation density function (FODF)
obtained by
(CSD) \cite{Tournier} from a benchmark DW-MRI dataset~\cite{ISBI}.}
\end{figure}
\begin{figure}[!h]
\centering
  \includegraphics[width=0.84\textwidth]{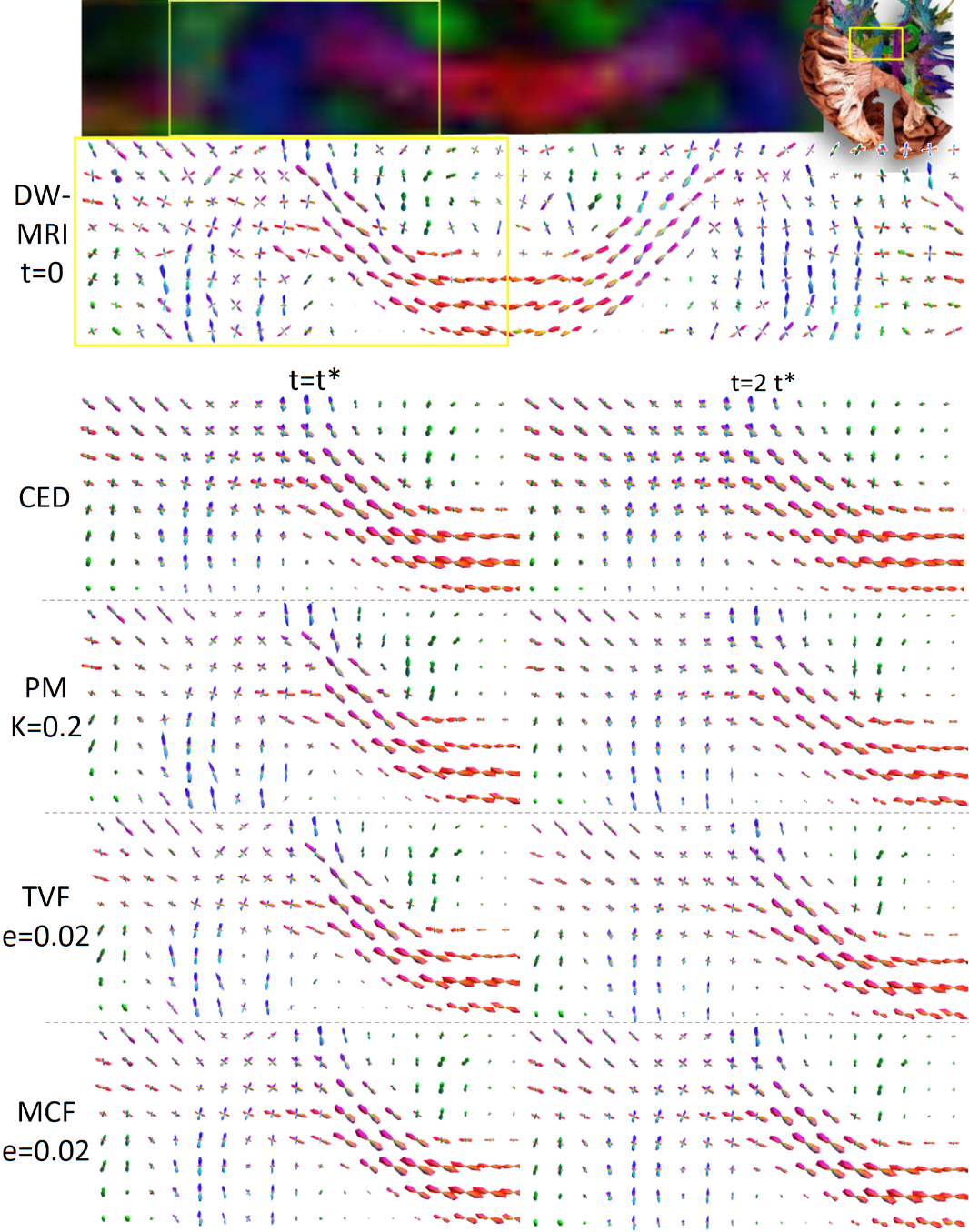}
 % \vspace{-0.2cm}
\caption{
\label{fig:2}
Qualitative comparison of denoising a FODF obtained by
(CSD) \cite{Tournier} from a standard DW-MRI dataset (with $b=1000 s/mm^2$ and $54$ gradient directions).
For the CSD we used up to 8th order spherical harmonics, and the FODF is then spherically sampled on a tessellation of the icosahedron with 162 orientations.}% (for scanner settings see \cite[ch.3.2.1]{portegies_improving_2015}).}
\end{figure}

For a qualitative comparison we applied TVF, MCF, PM diffusion and linear diffusion to a FODF field $U: \mathbb{M} \to \R^+$
obtained from a standard DW-MRI dataset  (with $b=1000 s/mm^2$, $54$ gradient directions) via constrained spherical deconvolution (CSD) \cite{Tournier}.
%
%
%where the settings of the scanner and CSD are explained in \cite[ch.3.2.1]{portegies_improving_2015}.
See Fig.~\!~\ref{fig:2}, where for each method, we used the optimal parameter settings with the artificial data-set. We see that
\begin{itemize}
\item all methods perform well on the real datasets. Contextual alignment of the angular profiles better reflects the anatomical fiber bundles,
\item MCF and TVF better preserve boundaries and angular sharpness,
\item MCF better preserves the amplitude at crossings at longer times.
\end{itemize}

\subsection{TVF \& MCF on $\R^2 \rtimes S^1$
for 2D Image Enhancement/Denoising \label{ch:d2}}

The initial condition for our TVF/MCF-PDE (\ref{PDE}) is set by an orientation score \cite{DuitsPhD} of image $f:\R^2 \to \R$ given by $\mathcal{W}_{\psi}f(\mathbf{x},\mathbf{n})=(\psi_{\mathbf{n}} \star f)(\mathbf{x})$ where $\star$ denotes correlation and $\psi_{\mathbf{n}}$ is the rotated wavelet aligned with $\mathbf{n} \in S^{1}$.
For $\psi$ we use a cake-wavelet \cite[ch:4.6]{DuitsPhD} $\psi$ with standard settings \cite{LieAnalysis}.
%waveletsize 75 px
%(standard settings $N_{\theta}=16$, $\nu=0.8$, $N=8$, $D_S=8$, see \cite[ch:2.1.5]{bekkersPhD}). % in our Mathematica package \url{www.lieanalysis.nl}).
Then we compute:
\begin{equation} \label{TVOS}
f \mapsto \mathcal{W}_{\psi}f \mapsto \Phi_t(\mathcal{W}_{\psi}f)(\cdot,\cdot) \mapsto
f_{t}^{a}(\cdot):= \int_{S^{1}} \Phi_t^{a}(\mathcal{W}_{\psi}f)(\cdot,\mathbf{n}) \,{\rm d}\mu_{S^1}(\mathbf{n}).
\end{equation}
for $t\geq 0$. Here $U \mapsto W(\cdot,t)=\Phi_t(U)$ denotes the flow operator on $\mathbb{M}$ (\ref{PDE}), but then the PDE in (\ref{PDE}) is re-expressed in the locally adaptive frame (LAD) $\{\mathcal{B}_{i}\}_{i=1}^3$ obtained by the method in  \cite{Fran2009,DuitsJanssen,LieAnalysis}. The PDE then becomes  \\
$\frac{\partial \tilde{W}}{\partial t} = (\|\nabla_{0}\tilde{W}\|^2_0+\varepsilon^2)^{\frac{a}{2}} \sum \limits_{i=1}^3 \tilde{D}_{ii}\;
\mathcal{B}_{i} \circ (\|\nabla_{0} \tilde{W} \|^2_0+\varepsilon^2)^{-\frac{1}{2}}\mathcal{B}_i \tilde{W}, \qquad a\in \{0,1\}$, \\
with $\tilde{D}_{11}=1$, $\tilde{D}_{22}=\tilde{D}_{33}$ as in CED-OS \cite[eq.72]{Fran2009}.
%and
% $(\mathcal{B}_{1},\mathcal{B}_{2},\mathcal{B}_{3})^T \mathbf{R}^T(\sqrt{D_{S}}\mathcal{A}_{1}, \sqrt{D_{S}} \mathcal{A}_{2},\sqrt{D_{A}} \mathcal{A}_{3})^T$, with rotation $\mathbf{R}$ explained in \cite[Fig.5]{DuitsJanssen}.
% as standardly included in our package \url{www.lieanalysis.nl}.
By the invertibility of the orientation score one has $f=f_0^a$
%for disk-limited images %\cite{bekkersPhD,JanssenJMIV}
so all flows depart from the original image.

For $a=0$ we call $f \mapsto f_{t}^a$ given by (\ref{TVOS}) a `TVF-OS flow', for $a=1$ we call it a `MCF-OS flow'. In Fig.\!~\ref{fig:quant} we show how
%relative $\mathbb{L}_{1}$-
errors progress with
$t\geq 0$. We see that inclusion of LAD is beneficial on the real image.
In Fig.~\ref{fig:qual} we give a qualitative comparison to CED-OS \cite{Fran2009}.
%We observe the same behavior as before.
Lines and plateaus are
best preserved by TVF-OS.
\begin{figure}
\centering
\includegraphics[height=0.19\textwidth]{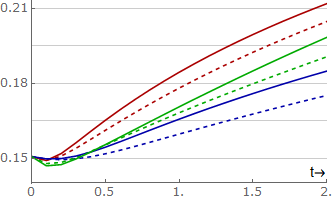}
\quad
\includegraphics[height=0.19\textwidth]{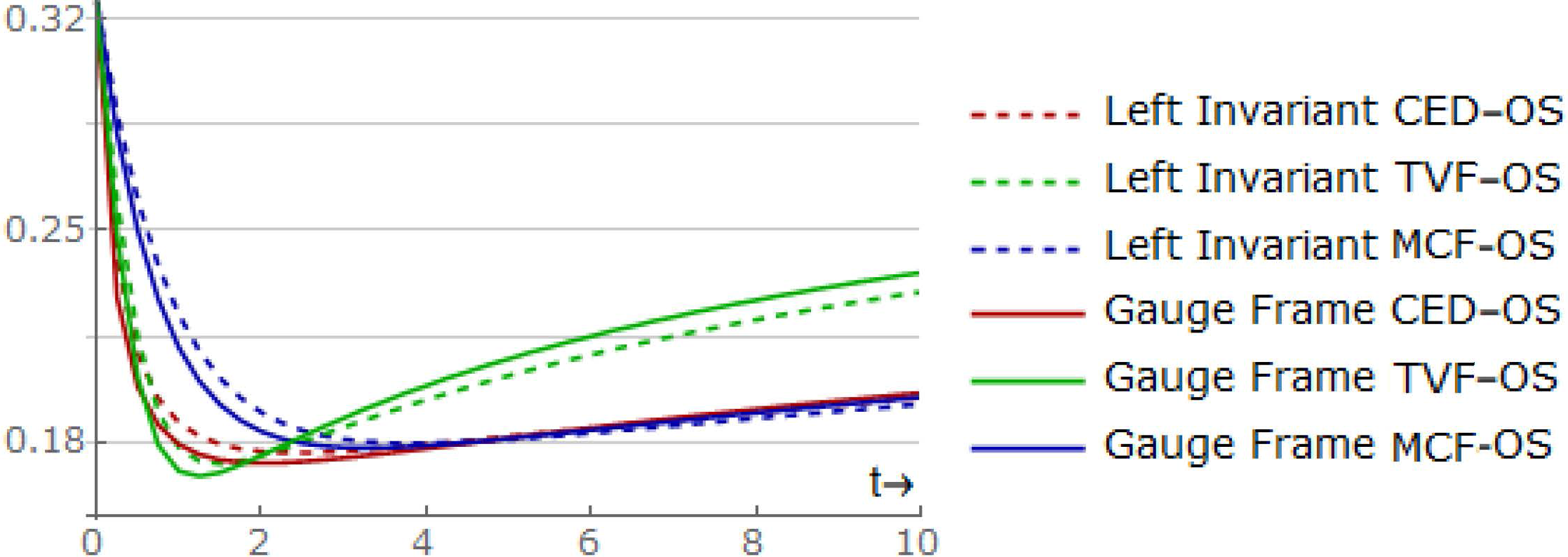}
\vspace{-0.3cm}
\caption{Relative $\mathbb{L}_1$ errors of the spirals test image (left) and the collagen image (right) for the CED-OS, MCF-OS and TVF-OS methods. \label{fig:quant}}
\end{figure}
\vspace{-0.3cm}
\begin{figure}
\centering
\subfloat{\includegraphics[width=0.24\textwidth]{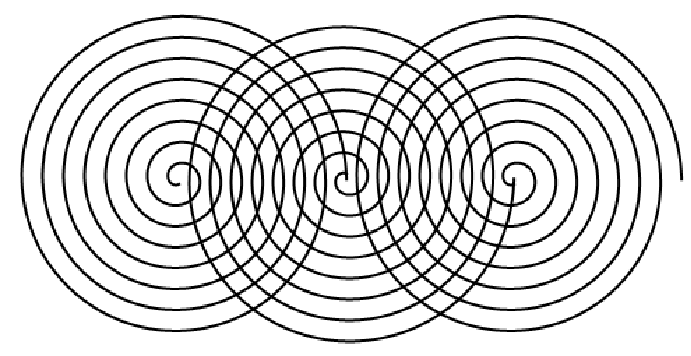}}
\
\subfloat{\includegraphics[width=0.24\textwidth]{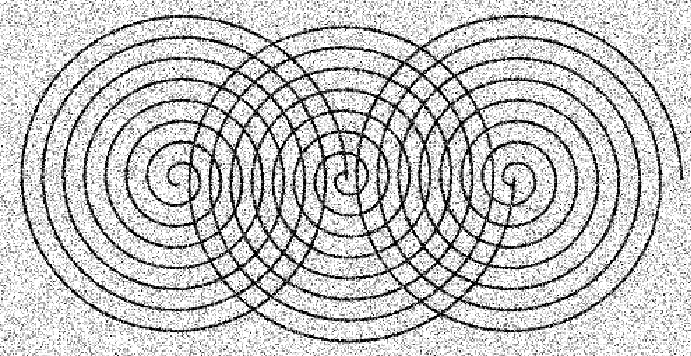}}
\
\subfloat{\includegraphics[width=0.24\textwidth]{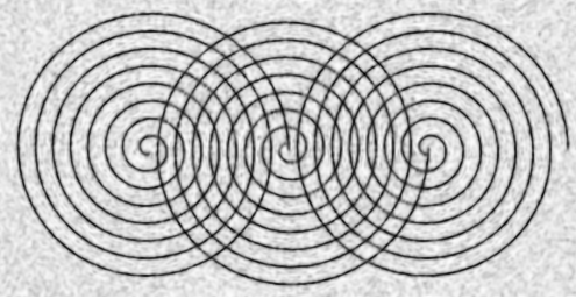}}
\
\subfloat{\includegraphics[width=0.24\textwidth]{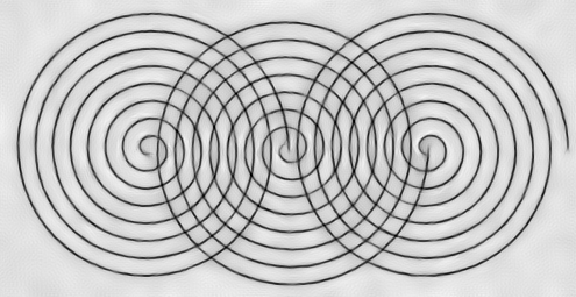}}
\\[0.1em]
\subfloat{\includegraphics[width=0.24\textwidth]{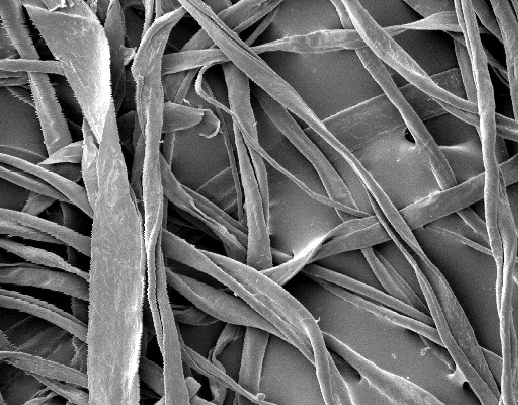}}
\
\subfloat{\includegraphics[width=0.24\textwidth]{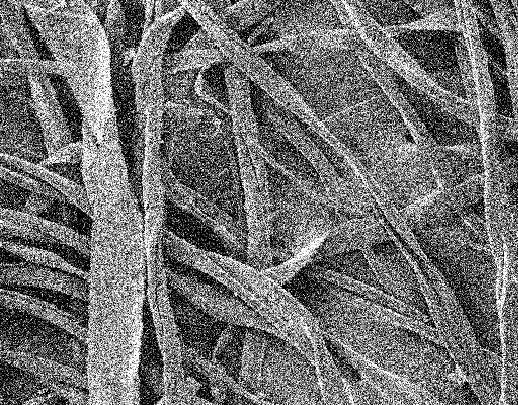}}
\
\subfloat{\includegraphics[width=0.24\textwidth]{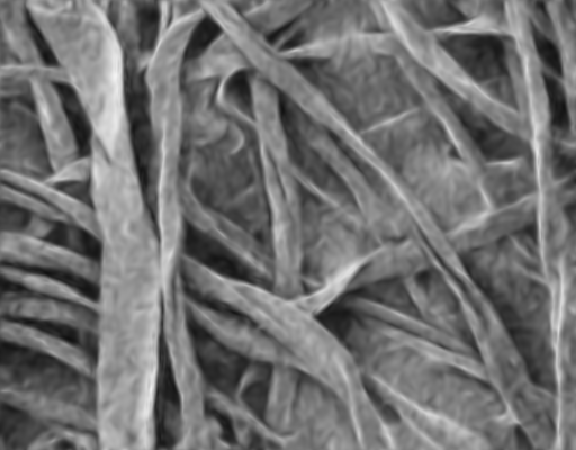}}
\
\subfloat{\includegraphics[width=0.24\textwidth]{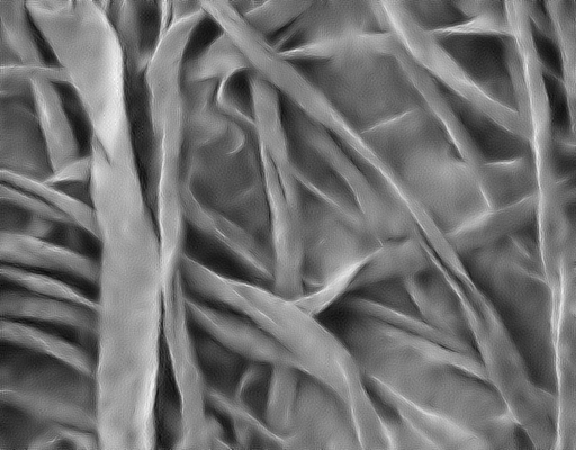}}
\vspace{-0.2cm}
\caption{From left to right; original image; noisy input image $f$,
CED-OS output image \cite{Fran2009}, TVF-OS output image $f_t^{0}$.
We have set $D_A=0.01$, $D_{S}=1$, and took $t=10 \cdot t^*$ for the spirals image and $t=2 \cdot t^*$ for the collagen image where $t^*$ minimizes the relative $\mathbb{L}_{1}$-error to stress different qualitative behavior. \label{fig:qual}}
\end{figure}
%\vspace{-0.2cm}
%\pagebreak
\section{Conclusion}
%\vspace{-0.2cm}
We have proposed a PDE system on the homogeneous space $\mathbb{M}=\mathbb{R}^{d} \rtimes S^{d-1}$ of positions and orientations, for
crossing-preserving denoising and enhancement of (lifted) images containing both complex elongated structures and plateaus.

It includes TVF, MCF and diffusion flows as special cases, and includes (sub-)Riemannian geometry. Thereby we generalized recent related works by Citti et al. \cite{Sanguinetti} and Chambolle \& Pock \cite{Chambolle} from 2D to 3D using a different numerical scheme with new convergence results (Theorem~\ref{th:conv}) and stability bounds. We used
the divergence and intrinsic gradient on a (sub)-Riemannian manifold above $\mathbb{M}$ for a formal weak-formulation of total variation flows, which simplifies if the lifted images are differentiable (Lemma~\ref{lemma:1}).

Compared to previous nonlinear crossing-preserving diffusion methods on $\mathbb{M}$, we showed improvements (Fig.~\ref{fig:quant},\ref{fig:qual}) over CED-OS methods \cite{Fran2009} (for $d=2$) and improvements over contextual fiber enhancement methods in DW-MRI processing (for $d=3$) \cite{CreusenDuits,DuitsJMIV2013} on real medical image data. We observe that crossings and boundaries (of bundles and plateaus) are better preserved over time. We support this quantitatively by a denoising experiment on a benchmark DW-MRI dataset, where MCF performs better than TVF and both perform better than Perona-Malik diffusions, in view of error reduction and stability.
\vspace{-1cm}\mbox{}
\bibliographystyle{splncs04}
\bibliography{literaturebf}
%
%\clearpage

\clearpage

\section*{Appendix A: Proof of Theorem~2.}

A functional $\Phi: H \to (-\infty, \infty]$ is said to be $\lambda$-convex for some $\lambda \in \mathbb{R}$ if
\[
u \mapsto \Phi(u) - \frac{\lambda}{2} \| u \|^2
\]
is convex. In that case, the functional
\[
u \mapsto \Phi(u) - \frac{\lambda}{2} \| u - v\|^2
\]
is convex as well, for arbitrary $v \in H$, because the latter functional deviates from the first by an affine functional.

We first prove a stability estimate for the minimization of $1/\tau$-convex functionals.

\begin{lemma}
	\label{le:stability-1-tau-convex}
Let $\tau > 0$. If a functional $\Phi: H \to (-\infty, \infty]$ on $H$ is $1/\tau$-convex, and $u^*$ is its unique minimizer, then for all $u \in H$,
\[
\frac{1}{2\tau} \| u - u^* \|^2 \leq \Phi(u) - \Phi(u^*).
\]	
\end{lemma}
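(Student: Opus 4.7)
The plan is to exploit the defining inequality of $1/\tau$-convexity at an interior point on the segment $[u^*,u]$, play it against the minimality of $u^*$, and then let the interpolation parameter tend to $0$.

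More concretely, I would first recall that $1/\tau$-convexity of $\Phi$ means
\[
\Phi((1-s)u^* + s u) \leq (1-s)\Phi(u^*) + s\Phi(u) - \frac{1}{2\tau} s(1-s) \| u - u^* \|^2
\]
for all $s \in [0,1]$ and all $u \in H$ (this is the standard form equivalent to convexity of $u \mapsto \Phi(u) - \frac{1}{2\tau}\|u\|^2$, which is what the preceding paragraph states). Next, I would use that $u^*$ is the global minimizer of $\Phi$ to estimate the left-hand side from below by $\Phi(u^*)$, and rearrange to
\[
\frac{1}{2\tau}(1-s)\|u - u^*\|^2 \leq \Phi(u) - \Phi(u^*).
\]
Finally, letting $s \downarrow 0$ yields the claimed inequality.

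An alternative route, which matches the framing set up just before the lemma, is to introduce the auxiliary functional $\Psi(u) := \Phi(u) - \frac{1}{2\tau}\|u - u^*\|^2$, which is convex by the remark about $v$-shifts preceding the lemma. Since $u^*$ minimizes $\Phi$, one has $0 \in \partial \Phi(u^*)$, and since the subgradient of $u \mapsto \tfrac{1}{2\tau}\|u - u^*\|^2$ at $u^*$ is $\{0\}$, the subgradient calculus gives $0 \in \partial \Psi(u^*)$, so $u^*$ minimizes $\Psi$ as well. Writing out $\Psi(u) \geq \Psi(u^*) = \Phi(u^*)$ is exactly the desired inequality.

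I do not expect any real obstacle here: both strategies are short and only use convexity plus optimality of $u^*$. The only mild care needed is to use the two-point form of $\lambda$-convexity rather than its Hessian form (since $\Phi$ may be non-smooth and take the value $+\infty$), and to handle the possibility that $\Phi(u) = +\infty$, in which case the inequality is trivial.
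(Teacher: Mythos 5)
Your proposal is correct and its first route is essentially the paper's own argument in direct form: the paper introduces $\Psi(u)=\Phi(u)-\Phi(u^*)-\frac{1}{2\tau}\|u-u^*\|^2$ and derives a contradiction from $\Psi(v)<0$ by interpolating along $[u^*,v]$ and noting the quadratic term is $O(t^2)$ while the convexity gain is $O(t)$, which is exactly your two-point inequality combined with minimality and $s\downarrow 0$. Both versions are sound (your subdifferential variant also works, given the continuity of the quadratic term needed for the sum rule), so there is nothing to fix.
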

\begin{proof}
The functional $\Psi:H \to (-\infty, \infty]$ given by
\[
\Psi(u) := \Phi(u) - \Phi(u^*) - \frac{1}{2\tau}\| u  - u^*\|^2
\]	
is convex. It is sufficient to show that $\Psi$ is nonnegative. If it were not, there would exist a $v \in H$ such that $\Psi(v) < 0$. We will show that then, for $t$ small enough, $\Phi(t v + (1-t) u^*) < \Phi(u^*)$, contradicting that $u^*$ is a minimizer. We first have by definition that, for $t \in (0,1)$,
\[
\begin{split}
\Phi(t v + (1-t) u^*) &- \Phi(u^*) - \frac{t^2}{2\tau}\| v - u^* \|^2 \\
&=
\Psi(t v + (1-t) u^*).
\end{split}
\]
By the convexity of $\Psi$,
\[
\begin{split}
\Psi(t v + (1-t) u^*)
&\leq t \Psi(v) + (1-t) \Psi(u^*) \\
&= t \Psi(v).
\end{split}
\]
Combining the two inequalities, we find
\[
\Phi(t v + (1-t) u^*) - \Phi(u^*) \leq t \Psi(v) + O(t^2),
\]
so that indeed, for $t $ small enough, $\Phi(tv + (1-t)u^*) < \Phi(u^*)$, leading to the announced contradiction.

Therefore, $\Psi$ is nonnegative, which means that
\[
\frac{1}{2\tau} \|u-u^*\|^2 \leq \Phi(u) - \Phi(u^*)
\]
for all $u \in H$.
\end{proof}
For a proper (i.e. not everywhere equal to $\infty$), lower semicontinuous, convex functional $F$, and $\tau > 0$, define the operator $J_\tau^F: H \to H$ by
\[
J_\tau^F[u_0] := \mathrm{argmin}_{u \in H} \left(\frac{1}{2\tau} \| u - u_0\|^2 + F(u) \right).
\]

\begin{proposition}
\label{pr:bound-gradient-flows}
Let $F, G: H \to [0,\infty]$ be two non-negative, proper, lower semicontinuous, convex functionals on a Hilbert space $H$, such that for all $u \in H$,
\begin{equation}
\label{eq:two-sided-bound}
F(u) - \delta \leq G(u) \leq F(u) + \delta.
\end{equation}
Let $u_0, v_0 \in H$, such that
\begin{equation}
\label{eq:slope-estimate}
|\partial F|(u_0) \leq L \quad \text{ and } \quad  |\partial G|(v_0) \leq L.
\end{equation}
Then, we have the following estimate for the gradient flow $u:[0,\infty) \to H$ of $F$ starting at $u_0$ and the gradient flow $v:[0,\infty) \to H$ of $G$ starting at $v_0$:
\[
\|u(t) - v(t)\| \leq
\begin{cases}
4 \sqrt{\delta t} + \|u_0 - v_0\| & \text{for } 0 \leq t \leq \frac{\delta}{L^2} \\
8 \sqrt[3]{L \delta t^{2}} + \|u_0 - v_0\| & \text{for } t > \frac{\delta}{L^2}.
\end{cases}
\]
\end{proposition}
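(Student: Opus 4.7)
The plan is to compare the two continuous gradient flows through their discrete resolvent approximations. Introduce the Minimizing Movement Scheme (MMS) with a common step $\tau>0$: set $u^\tau_0 = u_0$, $u^\tau_{k+1} = J_\tau^F[u^\tau_k]$ and $v^\tau_0 = v_0$, $v^\tau_{k+1} = J_\tau^G[v^\tau_k]$. By Brezis-Komura theory these discrete sequences converge to $u(t)$ and $v(t)$ at $t=k\tau$ as $\tau\downarrow 0$. The bound will then follow by first controlling $\|u^\tau_n - v^\tau_n\|$ for fixed $\tau$, then the MMS discretization errors in each flow, and finally optimizing $\tau$.

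The first key step is a per-step comparison
\[
\|J_\tau^F[w] - J_\tau^G[w]\| \leq 2\sqrt{\tau\delta}\qquad\text{for every }w\in H.
\]
To prove it, I would apply the preceding stability lemma for $1/\tau$-convex functionals to $\Phi_G(x) := \tfrac{1}{2\tau}\|x-w\|^2 + G(x)$, whose unique minimizer is $J_\tau^G[w]$; this yields $\tfrac{1}{2\tau}\|J_\tau^F[w] - J_\tau^G[w]\|^2 \leq \Phi_G(J_\tau^F[w]) - \Phi_G(J_\tau^G[w])$. The two-sided hypothesis $|F-G|\leq\delta$ combined with the minimizing property of $J_\tau^F[w]$ for $\Phi_F$ bounds the right-hand side by $2\delta$. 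Together with non-expansivity of the resolvent $\|J_\tau^F[a]-J_\tau^F[b]\|\leq \|a-b\|$, this telescopes to
\[
\|u^\tau_n - v^\tau_n\| \leq \|u_0-v_0\| + 2t\sqrt{\delta/\tau},\qquad t=n\tau.
\]

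To pass from the discrete scheme to the continuous flows I would invoke the classical MMS error estimate $\|u^\tau_n - u(n\tau)\| \leq C\tau L$ for slope-bounded initial data (Crandall-Liggett/Brezis), which relies on monotonicity of the local slope $|\partial F|$ along the convex gradient flow, and symmetrically for $v$. The triangle inequality then gives
\[
\|u(t) - v(t)\| \leq \|u_0-v_0\| + 2t\sqrt{\delta/\tau} + 2C\tau L.
\]
It remains to optimize $\tau$ in the two regimes announced in the statement. For $t\leq \delta/L^2$ take $\tau=t$: the second term becomes $2\sqrt{\delta t}$ while the third is $2CtL\leq 2C\sqrt{\delta t}$, producing the first branch $4\sqrt{\delta t} + \|u_0-v_0\|$ after absorbing constants. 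For $t>\delta/L^2$ the two $\tau$-dependent terms balance at $\tau = (t^2\delta/L^2)^{1/3}$, producing the second branch $8(L\delta t^2)^{1/3} + \|u_0-v_0\|$.

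The main technical obstacle is the time-independent MMS estimate $\|u^\tau_n - u(n\tau)\| \leq C\tau L$. In a Hilbert space with $|\partial F|(u_0)\leq L$ this follows from standard resolvent/semigroup estimates for maximal monotone operators, together with the fact that $|\partial F|(u^\tau_k)$ is non-increasing in $k$ along the discrete scheme so $L$ propagates through. Pinning down the numerical constants to land on the sharp coefficients $4$ and $8$ is a secondary but nontrivial bookkeeping step, requiring tight accounting in the per-step comparison and the regime-switching optimization.
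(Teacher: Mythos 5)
Your proposal is correct and follows essentially the same route as the paper: the per-step resolvent comparison $\|J_\tau^F[w]-J_\tau^G[w]\|\leq 2\sqrt{\delta\tau}$ via the stability lemma for $1/\tau$-convex functionals, non-expansivity of the resolvent, telescoping to the discrete bound, the a priori minimizing-movement error estimate of order $L\tau$ from the slope bound, and the two-regime optimization of the step size (the paper optimizes the integer $n=t/\tau$ with a ceiling, which is the only bookkeeping detail you leave implicit). No gaps.
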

The idea is that the stability estimate in Lemma \ref{le:stability-1-tau-convex} will allow us to conclude that $J_\tau^F[u_0]$ and $J_\tau^G[v_0]$ are close when $u_0$ and $v_0$ are close. By iterating the operators $J_\tau^F$ and $J_\tau^G$, we approximate the gradient flows of $F$ and $G$ respectively, and from the slope estimate (\ref{eq:two-sided-bound}) we will derive that this approximation is uniform. This will allow us to derive bounds for the gradient flows from the bounds for $J_\tau^F$ and $J_\tau^G$.
\begin{proof}	
Let $\tau>0$ and let $u_1^F := J^F_\tau [u_0]$ and $v_1^G := J^G_\tau[v_0]$. Set also $v_1^F:=J^F_\tau[v_0]$ and $u_1^G := J^G_\tau[u_0]$. Then, using the definition of $v_1^F$ in the second inequality below, we find
\[
\begin{split}
\frac{1}{2\tau} &\| v_1^F - v_0 \|^2 + G(v_1^F)\\
& \overset{(\ref{eq:two-sided-bound})}{\leq} \frac{1}{2\tau} \| v_1^F - v_0 \|^2 + F(v_1^F)+ \delta\\
& \leq \frac{1}{2\tau}\|v_1^G - v_0\|^2 + F(v_1^G) + \delta \\
& \overset{(\ref{eq:two-sided-bound})}{\leq} \frac{1}{2\tau}\|v_1^G - v_0\|^2 + G(v_1^G) + 2 \delta.
\end{split}
\]
Because the functional
\[
v \mapsto \frac{1}{2 \tau} \|v -v_0\|^2 + G(v)
\]
is $1/\tau$-convex, it follows by Lemma \ref{le:stability-1-tau-convex} that
\[
\frac{1}{2\tau} \| v_1^F - v_1^G \|^2 \leq
2 \delta.
\]

Now we use that $J_\tau^F$ is non-expansive \cite[Eq.~(4.0.2)]{Ambrosio}, so
\[
\| u_1^F - v_1^F \| = \| J_\tau^F(u_0) - J_\tau^F(v_0) \| \leq \| u_0 - v_0\|.
\]
We conclude that
\[
\|u_1^F - v_1^G \| \leq \|  u_0 - v_0  \| + 2 \sqrt{\delta \tau}.
\]
By iterating this estimate, we derive
\begin{equation}
\label{eq:close-discrete}
\| (J_\tau^F)^n[u_0] - (J_\tau^G)^n[v_0] \|
\leq \|u_0 - v_0 \| + 2 n \sqrt{ \delta \tau }.
\end{equation}
The a priori estimate \cite[Theorem 4.0.4, (v)]{Ambrosio} yields that the gradient flows $u$ and $v$ of $F$ and $G$ respectively are approximated well by $(J_{t/n}^F)^n[u_0]$ and $(J_{t/n}^G)^n[v_0]$. More precisely, for $t > 0$ and $n > 0$, the a priori estimate gives
\[
\left\| u(t) - (J_\tau^F)^n\right\| \leq \frac{L t}{\sqrt{2}n} \quad \text{ and } \quad \left\| v(t) - (J_\tau^G)^n\right\| \leq \frac{L t}{\sqrt{2}n}.
\]
By these a priori estimates and the estimate for the discrete flows (\ref{eq:close-discrete}), we see that
\[
\begin{split}
\| u(t) - v(t) \|
&\leq \| u(t) - (J_{t/n}^F)^n[u_0] \| + \| v(t) - (J_{t/n}^F)^n[v_0] \|
\\
&\qquad + \| (J_{t/n}^F)^n[u_0] - (J_{t/n}^G)^n[v_0] \|
 \\
&\leq \sqrt{2}L \frac{t}{n} + 2n \sqrt{\frac{\delta t}{n}} + \|u_0 - v_0\|.
\end{split}
\]
To derive the final estimates, we need to make good choices for $n$. If $0 \leq t \leq \delta / L^2$, we take $n=1$ and obtain
\[
\begin{split}
\|u(t) - v(t)\| &\leq \sqrt{2} L t + 2 \sqrt{ \delta t} + \|u_0 - v_0\| \\
&\leq 4 \sqrt{\delta t} + \|u_0 - v_0\|.
\end{split}
\]
If $t > \delta/L^2$, we choose $n =\lceil L^{2/3} (t/\delta)^{1/3} \rceil$, which is larger than or equal to $2$. In that case,
\[
n/2 \leq n-1 < L^{2/3} (t/\delta)^{1/3} \leq n.
\]
We then obtain
\[
\|u(t) - v(t) \| \leq 8 L^{1/3} \delta^{1/3} t^{2/3} + \|u_0 - v_0\|.
\]
\end{proof}

We now know that the gradient flows of $F$ and $G$ are close when the slopes $|\partial F|(u_0)$ and $|\partial G|(v_0)$ are bounded. This assumption can be rather stringent. We will relax it, and merely require that $F(u_0)$ and $G(v_0)$ are bounded by some constant $E > 0$, in exchange for a bound between gradient flows that is slightly worse. Our approach will be to run the gradient flow for a small time $s$ from $u_0$ and $v_0$, and use the regularizing property of the gradient flow to conclude a slope bound. On the other hand, if $s$ is small, $u(s)$ and $v(s)$ will be close to $u_0$ and $v_0$. We will then choose $s$ (almost) optimally to derive a bound between the gradient flows.
%\[
%\begin{split}
%\frac{1}{2\tau}\| u_1^F - v_1^G \|^2
%&\leq \frac{1}{2\tau}\left(\|u_1^F - v_1^F\| + \|v_1^F - v_1^G\| \right)^2\\
%&\leq \frac{1}{2\tau}\|u_1^F - v_1^F\|^2 +
%\frac{1}{\tau}\|u_1^F - v_1^F\| \| v_1^F - v_1^G\|
%+ \frac{1}{2\tau} \|v_1^F - v_1^G\|^2 \\
%&\leq \frac{1}{2\tau} \|u_0 - v_0\|^2 +  + 2 \delta
%\end{split}
%\]
%
%Let $u_1, \dots, u_n$ be defined recursively by
%\[
%u_i = J^F_\tau[u_{i-1}]
%\]
%and
%\[
%v_i = J^G_\tau[v_{i-1}].
%\]
%It follows that
%\[
%\frac{1}{2\tau}\left(\| u_{i+1} - v_{i+1} \|^2 - \| u_i - v_i\|^2\right)
%\leq
%\]
\begin{theorem}
	Let $F:H \to [0,\infty]$ and $G:H \to [0,\infty]$ be two proper, lower semicontinuous, convex functionals on a Hilbert space $H$, such that
	\[
	F(u) - \delta \leq G(u) \leq F(u) + \delta
	\]
	for all $u \in H$. Let $u_0, v_0 \in H$ be such that $F(u_0) \leq E$ and $G(v_0) \leq E$ and $\|u_0 - u^*\| \leq M$ and $\|v_0 - v^*\| \leq M$, for some constants $E, M > 0$, where $u^*$ and $v^*$ minimize $F$ and $G$ respectively. The gradient flow $u:[0,\infty) \to H$ of $F$ starting at $u_0$, and the gradient flow $v:[0,\infty) \to H$ of $G$ starting at $v_0$ satisfy
	\[
	\| u(t) - v(t) \| \leq 16 (M E \delta t^2)^{1/5} + \| u_0 - v_0 \|
	\]
	for all $0 \leq t < E^6 M^6/\delta^9$.
\end{theorem}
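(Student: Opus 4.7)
The plan is to bootstrap Proposition~\ref{pr:bound-gradient-flows}, whose hypothesis $|\partial F|(u_0),|\partial G|(v_0)\le L$ is too strong to verify directly from the data $(E,M,\delta)$. I exploit the regularizing effect of convex gradient flows: after running both flows for a small time $s>0$ the new initial data $u(s),v(s)$ have a quantitative slope bound, while the displacement from $u_0,v_0$ remains controlled. The proposition is then applied on the interval $[s,t]$, the two sources of error are combined by the triangle inequality, and $s$ is optimized at the end.

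Two standard estimates for gradient flows of proper lower-semicontinuous convex functionals will be invoked (both follow from Brezis-Komura / Ambrosio-Gigli-Savar\'e theory, or a direct monotonicity argument). First, the energy identity $\int_0^s|u'|^2\,dr = F(u_0)-F(u(s))\le E$ combined with Cauchy-Schwarz gives the short-time displacement bound $\|u(s)-u_0\|\le\sqrt{sE}$, and analogously for $v$. Second, integrating the evolution variational inequality with test point $u^*$ yields $F(u(s))-F(u^*)\le M^2/(2s)$; using that $r\mapsto |u'|(r)$ is non-increasing along $0$-convex flows then produces the slope regularization
\[
 |\partial F|(u(s)) = |u'|(s) \le \sqrt{2}\,M/s,
\]
and likewise $|\partial G|(v(s))\le \sqrt{2}\,M/s$. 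Applying Proposition~\ref{pr:bound-gradient-flows} to the flows restarted at $u(s),v(s)$ with $L=\sqrt{2}\,M/s$ over elapsed time $t-s$, in its second regime $t-s>\delta/L^2$, gives
\[
 \|u(t)-v(t)\|\le \|u_0-v_0\| + 2\sqrt{sE} + 8\bigl(\sqrt{2}\,M\delta\,(t-s)^2/s\bigr)^{1/3}.
\]

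Balancing the two $s$-dependent terms $\sqrt{sE}$ and $(M\delta t^2/s)^{1/3}$ — the first-order condition gives $s^{5/6}\sim (M\delta t^2)^{1/3}/E^{1/2}$, hence $s\sim (M\delta t^2)^{2/5}E^{-3/5}$ — makes both of size $(ME\delta t^2)^{1/5}$, and collecting the numerical factors $2,8,\sqrt{2}^{\,1/3}$ safely into a single constant yields the claimed prefactor $16$. The restriction $t<E^6M^6/\delta^9$ enters only to ensure that (a) the optimal $s$ lies in $(0,t)$ and (b) the second (not the first) regime of Proposition~\ref{pr:bound-gradient-flows} is the operative one; beyond this range the claimed estimate is dominated by the trivial a priori bound $\|u(t)-v(t)\|\le \|u_0-v_0\|+2M$. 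The main technical hurdle is the slope regularization: unlike for strongly convex energies where the decay is exponential, here one only has algebraic decay $\sim 1/s$, and the argument requires carefully combining the $M^2/(2s)$ bound on excess energy with the monotonicity of $|u'|$ along the flow.
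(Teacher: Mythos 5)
Your proposal follows essentially the same route as the paper's proof: run both flows for a short time $s$, invoke the regularizing estimate $|\partial F|(u(s))\lesssim M/s$ and the short-time displacement bound $\|u(s)-u_0\|\lesssim\sqrt{sE}$, apply Proposition~\ref{pr:bound-gradient-flows} with $L\sim M/s$ in its second regime, and optimize $s\sim (M\delta t^2)^{2/5}E^{-3/5}$ to obtain the $(ME\delta t^2)^{1/5}$ rate. The only (harmless) deviation is that you restart the comparison at $u(s),v(s)$ and run for time $t-s$, which additionally requires $s<t$ (not guaranteed by the stated upper bound on $t$ when $t$ is small, though that case follows trivially from the displacement bound), whereas the paper compares $u(t+s)$ with $v(t+s)$ and pulls back via non-expansiveness of the semigroup, avoiding that constraint.
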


\begin{proof}
By the Evolution Variational Inequality \cite[Theorem 4.0.4, (iii)]{Ambrosio}, we know that for all $s > 0$
\begin{subequations}
	\label{eq:close-to-initial}
\begin{equation}
\|u(s) - u_0\| \leq \sqrt{2 s F(u_0) }
\end{equation}
and
\begin{equation}
\|v(s) - v_0 \| \leq \sqrt{2 s G(v_0) }.
\end{equation}
\end{subequations}

By the regularizing property \cite[Theorem 4.0.4, (ii)]{Ambrosio},
\begin{subequations}
	\label{eq:slope-estimates}
\begin{equation}
|\partial F|(u(s)) \leq \frac{1}{s}\| u_0 - u^* \| \leq \frac{M}{s}
\end{equation}
and
\begin{equation}
|\partial G|(v(s)) \leq \frac{1}{s} \|v_0 - v^* \| \leq \frac{M}{s}
\end{equation}
\end{subequations}
where $u^*$ minimizes $F$ and $v^*$ minimizes $G$.

Because the gradient flow is a non-expansive semigroup \cite[Theorem 4.0.4, (iv)]{Ambrosio}, we get
\[
\begin{split}
\| u(t) - v(t) \|
&\leq \| u(t+s) - v(t+s) \| + \| u(t+s) - u(t) \| + \| v(t+s) - v(t) \|\\
&\leq \| u(t+s) - v(t+s) \| + \| u(s) - u_0 \| + \|v(s) - v_0 \|.
\end{split}
\]

Now assume $t < E^6 M^6/\delta^9$.
We will want to choose $s$ (almost) optimally, depending on $t$. We choose
\[
s = \frac{M^{2/5} \delta^{2/5} t^{4/5}}{E^{3/5}}
\]
and note that with $L := M/s$, we have
\[
t \geq \frac{\delta}{L^2}.
\]
By the slope estimates (\ref{eq:slope-estimates}) we can apply Proposition \ref{pr:bound-gradient-flows} to the gradient flows starting at $u(s)$ and $v(s)$, to get
\[
\begin{split}
\|u(t) - v(t) \|&\leq 8 M^{1/3} s^{-1/3} \delta^{1/3} t^{2/3} + \|u(s) - v(s) \| + \| u(s) - u_0 \|
+ \| v(s) - v_0 \| \\
&\leq 8 M^{1/3} s^{-1/3} \delta^{1/3} t^{2/3} + 2 \| u(s) - u_0 \| + 2\|v(s) - v_0\| + \| u_0 - v_0\|\\
&\overset{(\ref{eq:close-to-initial})}{\leq} 8 M^{1/3} s^{-1/3} \delta^{1/3} t^{2/3} + \sqrt{ 32 s E } + \| u_0 - v_0 \| \\
&= 16 M^{1/5} \delta^{1/5} t^{2/5} E^{1/5} + \|u_0 - v_0\|.
\end{split}
\]
\end{proof}

\end{document}